\def\sec{\setcounter{equation}{0}\setcounter{figure}{0}}
\newtheorem{Lemma}{Lemma}[section]
\newtheorem{theorem}[Lemma]{Theorem}
\newtheorem{corollary}[Lemma]{Corollary}
\newtheorem{remark}[Lemma]{Remark}
\newtheorem{example}[Lemma]{Example}
\newcommand{\reals}{{\mathbb R}}
\newcommand{\bbr}{\reals}
\newcommand{\vep}{\varepsilon}
\def\definedas{\stackrel{\Delta}{=}}\newcommand{\BX}{{\bf X}}
\newcommand{\bt}{{\bf t}}
\newcommand{\bs}{{\bf s}}
\newcommand{\ba}{{\bf a}}
\newcommand{\bb}{{\bf b}}
\newcommand{\be}{{\bf e}}
\newcommand{\bnull}{{\bf 0}}
\newcommand{\bomega}{{\bf \omega}}
\newcommand{\one}{{\bf 1}}
\newcommand{\cov}{\text{cov}}
\newcommand{\cH}{{\mathcal H}}
\newcommand{\cC}{{\mathcal C}}
\def\:{:\,}
\def\definedby{\stackrel{\Delta}{=}}
\newcommand{\bc}{\begin{center}}
\newcommand{\ec}{\end{center}}
\newcommand{\beq}{\begin{eqnarray}}
\newcommand{\eeq}{\end{eqnarray}}
\newcommand{\beqq}{\begin{eqnarray*}}
\newcommand{\eeqq}{\end{eqnarray*}}
\begin{document}
\begin{frontmatter}
\runtitle{Climbing down Gaussian  peaks}
\title{Climbing down Gaussian  peaks}   

\begin{aug}
\author{\fnms{Robert J.} \snm{Adler}\thanksref{t1,t4}
\ead[label=e1]{robert@ee.technion.ac.il}
\ead[label=u1,url]{webee.technion.ac.il/people/adler}}
\and 
\author{\fnms{Gennady}
  \snm{Samorodnitsky}\thanksref{t1,t2}\ead[label=e2]{gs18@cornell.edu}
\ead[label=u2,url]{www.orie.cornell.edu/$\sim$gennady/} }
\thankstext{t1}{Research supported in part by US-Israel Binational
Science Foundation, 2008262}
\thankstext{t2}{Research supported in part by ARO 
grant W911NF-12-10385 and NSF grant  DMS-1005903 at Cornell University.}
\thankstext{t4}{Research supported in part by  URSAT, ERC Advanced
  Grant 320422.}
 
\runauthor{Adler and Samorodnitsky}
\end{aug}

\begin{keyword}[class=AMS]
\kwd[Primary ]{60G15, 60F10; }
\kwd[Secondary ]{60G60, 60G70, 60G17.}
\end{keyword}\begin{keyword}
\kwd{Gaussian process, excursion set, large deviations, exceedence
  probabilities, topology.}
\end{keyword}

\setcounter{lemma}{0}


\begin{abstract}
How likely is the high level of a continuous Gaussian random field on
an Euclidean space  to
have a ``hole'' of a certain dimension and depth? Questions of this
type are difficult, but in this paper we make progress on questions
shedding new light in existence of holes. How likely is the field to
be above a high level on one compact set (e.g. a sphere) and to be
below a fraction of that level on some other compact set, e.g. at the
center of the corresponding ball? How likely is the field to be below that
fraction of the level {\it anywhere} inside the ball? We work on the
level of large deviations. 
\end{abstract}
\end{frontmatter}

\section{Introduction}\label{sec: Intro}
\sec

Let $T$ be a compact subset of $\bbr^d$. For a real-valued 
sample continuous random field $\BX = (X(\bt), \, \bt\in
T)$ and a level $u$, the
excursion set of $\BX$ above the level $u$ is the random set 
\begin{equation} \label{e:exc.set}
A_u= \bigl\{ \bt\in T:\, X(\bt)>u\bigr\}\,.
\end{equation}
Assuming that the entire index set $T$ has no interesting topological
features (i.e., $T$  is homotopic to a ball), 
what is the structure of the excursion  set? This is a generally difficult and
important question, and it constitutes an active research area. See
\cite{adler:taylor:2007} and \cite{azais:wschebor:2009} for in-depth
discussions. In this paper we consider the case when the random field
$\BX$ is Gaussian.  Even in this case the problem is still difficult. 

In a previous paper \cite{adler:moldavskaya:samorodnitsky:2014} we
studied a certain connectedness property of the excursion set $A_u$
for high level $u$. Specifically, given two distinct points in
$\bbr^d$, say, $\ba$ and $\bb$, we studied the asymptotic behaviour, 
as $u\to\infty$, of the conditional probability that, given $X(\ba)>u$ and
$X(\bb)>u$, there exists a path $\xi$ between $\ba$ and $\bb$ such
that $X(\bt)>u$ for every $\bt\in\xi$. 

In contrast, in this paper our goal is to study the probability that
the excursion set $A_u$  has holes of a certain size over which the random
field drops a fraction of the level $u$. We start with some examples
of the types of  probabilities we will look at. We will use the
following notation. For an Euclidean ball $B$ will denote by $c_B$ its
center and by $S_B=\partial(B)$ the sphere forming its boundary.
Consider the following probabilities.   For $0<r\leq 1$ denote 
\begin{equation} \label{e:spheres}
\Psi_{\text sp}(u; r) = P\left( \text{there exists a ball $B$
entirely in $T$ }\right.
\end{equation}
$$
\left. \text{   such that 
$X(\bt)>u$ for all $\bt\in S_B$ but
    $X(\bs)<r u$ for some $\bs\in B$}\right)
$$
and
\begin{equation} \label{e:spheres.center}
\Psi_{\text sp; c}(u; r) = P\left( \text{there exists a ball $B$
   entirely in $T$  } \right.
\end{equation}
$$
\left. \text{ such that $X(\bt)>u$ for all $\bt\in S_B$ but
    $X(c_B)<r u$}\right)\,.
$$
Simple arguments involving continuity show that the relevant sets in
both \eqref{e:spheres} and \eqref{e:spheres.center} are measurable. 
Therefore,  the probabilities $\Psi_{\text sp}(u; \tau) $ and $\Psi_{\text
  sp; c}(u; \tau) $ are well defined. These are the probabilities of
events that, for some ball, the boundary of the ball belongs to the
excursion set $A_u$, but the excursion set has a hole somewhere inside
the ball in one case, containing the center of the ball in another case, in
which the value of the field drops below $\tau u$. 

We study the logarithmic behaviour of probabilities of this type by
using the large deviation approach. We start with a setup somewhat
more general than that described above. Specifically, let $\cC$
be a collection of ordered pairs $(K_1,K_2)$ of 
nonempty compact subsets of $T$. We denote, for
$0<r\leq 1$, 
\begin{equation} \label{e:two.sets}
\Psi_{\cC}(u; r) =  P\left(  \text{ there is $(K_1,K_2)\in \cC$
    such that  }\right.
\end{equation} 
$$
\left. \text{  $X(\bt)>u$ for each $\bt\in K_1$  and $X(\bt)<r u$ for each
    $\bt\in K_2$.}\right)
$$

We note that the probabilities  $\Psi_{\text sp}(u;
r) $ and $\Psi_{\text   sp; c}(u; r) $ are special cases of the
probability 
$\Psi_{\cC}(u; r)$ with the collections $\cC$ being, respectively,
$$
\cC = \Bigl\{ \bigl( S_B, \bs\bigr), \ \text{ $B$ a ball entirely in
$T$ and $\bs\in B$}\Bigr\}
$$
and
$$ 
\cC = \Bigl\{ \bigl( S_B, c_B\bigr), \ \text{ $B$ a ball entirely in 
$T$ }\Bigr\}\,.
$$

In Section \ref{sec:large.deviations}  
we first introduce the necessary technical
background, and then prove a large deviation result in
the space of continuous functions for the probability $\Psi_{\cC}(u;
r)$. This result establishes a connection of the asymptotic behaviour
of  the probability $\Psi_{\cC}(u;r)$ to a certain optimization problem. The dual
formulation of this problem involves optimization over a  
family of probability measures, and in Section
\ref{sec:optimal.measures} we describe important properties of the
measures that are optimal for the dual problem. The general theory
developed in these two  sections
leads to particularly transparent and intuitive results when applied
to isotropic Gaussian fields. This is explored in Section
\ref{sec:isotropic}.

\section{A large deviations result} \label{sec:large.deviations}

Consider a real-valued centered continuous Gaussian
random field indexed by a compact subset $T\subset \bbr^d$, 
$\BX = (X(\bt), \, \bt\in T)$. We denote the covariance function of
$\BX$ by  $R_\BX(\bs,\bt)= \cov(X(\bs),X(\bt))$. We view $\BX$ as a
Gaussian random element in the space $C(T)$ of continuous functions 
on $T$, equipped with the supremum norm, whose law is  a Gaussian
probability measure $\mu_\BX$ on $C(T)$. See 
e.g. \cite{vandervaart:vanzanten:2008} about this change of the
viewpoint, and for more information on the subsequent discussion. 

The reproducing kernel Hilbert space (henceforth RKHS)
$\cH$ of the Gaussian measure $\mu_\BX$ (or of the random field $\BX$)
is a subspace of  $C(T)$  obtained as
follows. We identify $\cH$ with the closure $\mathcal L$
in the mean square norm of
the space of finite linear combinations $\sum_{j=1}^k a_jX(\bt_j)$ of
the values of the process, $a_j\in\bbr, \, \bt_j\in T$  for
$j=1,\ldots, k$, $k=1,2,\ldots$  
via the injection ${\mathcal L}\to C(T)$ given by 
\begin{equation} \label{e:embed.Gen}
H\to w_H = \Bigl( E\bigl( X(\bt)H\bigr), \ \bt\in T \Bigr)\,.
\end{equation}
We  denote by $(\cdot,\cdot)_\cH$ and $\| \cdot\|_\cH$ the inner
product and the norm in the  RKHS $\cH$. By definition, 
\beq
\label{norm:equalities}
\|w_H\|^2_\cH \ = \ E(H^2)\,.
\eeq

The ``reproducing property'' of the space $\cH$ is a consequence of
the following observations. For every $\bt\in\bbr^d$, the fixed $\bt$
covariance 
function $R_{\bt}=R(\cdot,\bt)$ is in $\cH$. Therefore, for every $w_H\in
\cH$, and $\bt\in\bbr^d$, $w_H(\bt)=(w_H,R_\bt)_\cH$. In particular,
the coordinate projections are continuous operations on the RKHS.

The quadruple $(C(T), \cH, w, \mu_\BX)$  is  a Wiener
quadruple in the sense of Section 3.4 in
\cite{deuschel:stroock:1989}. This allows one to use the machinery of
large deviations for Gaussian measures described there.

The following result is a straightforward application of the general large
deviations machinery.
\begin{theorem} \label{t:LDP.path} 
\ Let $\BX = (X(\bt), \, \bt\in T)$ be a continuous Gaussian
random field on a compact set $T\subset\bbr^d$. Let $\cC$
be a collection of ordered pairs $(K_1,K_2)$ of 
nonempty compact subsets of $\bbr^d$, compact in the product Hausdorff
distance. Then for $0<r\leq 1$, 
\beq
\label{e:LD.limit} 
-\frac12 \lim_{\tau\uparrow r}D_{\cC}(\tau)  \leq 
\liminf_{u\to\infty} \frac{1}{u^2} \log \Psi_{\cC}(u;  r) 
\eeq
\beqq
\leq \limsup_{u\to\infty} \frac{1}{u^2} \log \Psi_{\cC}(u;  r) 
\leq -\frac12 D_{\cC}(r)\,,
\eeqq
where for $r>0$, 
\beq \label{e:optimization.big}
D_{\cC}(r)
 &\definedas&\inf\left\{
EH^2 \: H\in {\mathcal L},\
   \text{and, for some  $(K_1,K_2)\in \cC$, }\right.
\eeq
\beqq
\left. \text{
  $E\bigl(X(\bt)H\bigr)\geq 1$ for each $\bt\in K_1$  and $E\bigl(
X(\bt)H\bigr)\leq r $ for each
    $\bt\in K_2$}  \right\}.
\eeqq
\end{theorem}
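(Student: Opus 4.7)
The plan is to apply the general Schilder-type large deviation principle for Gaussian measures on $C(T)$ (Theorem 3.4.12 of Deuschel--Stroock, as advertised earlier in the section) to the rescaled field $\BX/u$. That LDP has speed $u^2$ and good rate function $I(f)=\tfrac12\|f\|_\cH^2$ if $f\in\cH$ and $+\infty$ otherwise, relative to the law of $\BX/u$ on $C(T)$. Rewriting the event in \eqref{e:two.sets}, one has
\[
\Psi_{\cC}(u;r) \;=\; P\!\left( \BX/u \in G(r)\right), \qquad
G(r) = \bigl\{ f\in C(T):\exists(K_1,K_2)\in\cC,\ f>1\text{ on }K_1,\ f<r\text{ on }K_2\bigr\}.
\]
The proof then reduces to sandwiching $G(r)$ between an open and a closed set whose rate-function infima both equal $\tfrac12 D_\cC(\cdot)$ (up to the left-approximation in $\tau\uparrow r$), and to identifying that infimum using the RKHS isomorphism $H\mapsto w_H$ with $\|w_H\|_\cH^2=EH^2$.

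For the upper bound, I would replace $G(r)$ by the enlarged set
\[
\overline G(r)=\bigl\{ f\in C(T):\exists(K_1,K_2)\in\cC,\ f\ge 1\text{ on }K_1,\ f\le r\text{ on }K_2\bigr\}.
\]
A routine argument shows this set is closed in $C(T)$: if $f_n\to f$ uniformly with witnesses $(K_1^n,K_2^n)\in\cC$, then the assumed compactness of $\cC$ in the product Hausdorff distance yields a subsequence with $(K_1^{n_k},K_2^{n_k})\to(K_1,K_2)\in\cC$, and uniform continuity of $f$ passes the inequalities to the limits. The Gaussian LDP upper bound then gives $\limsup u^{-2}\log \Psi_\cC(u;r)\le -\inf_{f\in\overline G(r)}\tfrac12\|f\|_\cH^2$. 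The infimum is taken over $f\in\cH$, and using $f=w_H$ with $H\in\cL$, the defining inequalities become exactly $E(X(\bt)H)\ge 1$ on $K_1$ and $E(X(\bt)H)\le r$ on $K_2$, while $\|w_H\|_\cH^2=EH^2$ by \eqref{norm:equalities}; hence the infimum equals $D_\cC(r)$.

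For the lower bound, fix $\tau<r$ and work with the open subset
\[
G_0(\tau)=\bigl\{ f\in C(T):\exists(K_1,K_2)\in\cC,\ f>1\text{ on }K_1,\ f<\tau\text{ on }K_2\bigr\}\subset G(r).
\]
Openness follows because, given any witness pair and any $f\in G_0(\tau)$, continuity of $f$ on the compact sets $K_1,K_2$ provides a uniform gap $\delta>0$, and all $g$ with $\|g-f\|_\infty<\delta$ still satisfy the strict inequalities with the same witness. The LDP lower bound then gives
\[
\liminf_{u\to\infty}\frac{1}{u^2}\log\Psi_\cC(u;r)\ \ge\ \liminf_{u\to\infty}\frac{1}{u^2}\log P(\BX/u\in G_0(\tau))\ \ge\ -\inf_{f\in G_0(\tau)}\tfrac12\|f\|_\cH^2 .
\]
To compare this infimum with $D_\cC(\tau)$, given any admissible $H$ in the definition of $D_\cC(\tau')$ for some $\tau'<\tau$ one checks that $w_H\in G_0(\tau)$ after a small perturbation (either by rescaling $H$ by $1+\varepsilon$ or by adding a tiny multiple of the witness-carrying element, exploiting strict inequality $\tau'<\tau$), which shows $\inf_{G_0(\tau)}\|f\|_\cH^2\le D_\cC(\tau')$ for any $\tau'<\tau$, and hence $\le \lim_{\tau'\uparrow\tau}D_\cC(\tau')$. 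Noting monotonicity $\tau\mapsto D_\cC(\tau)$ (a larger $\tau$ only relaxes the constraint), letting $\tau\uparrow r$ produces the stated lower bound $-\tfrac12\lim_{\tau\uparrow r}D_\cC(\tau)$.

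The principal obstacle is the openness/closedness dichotomy: the strict versus weak inequalities in the definition of $\Psi_\cC(u;r)$ force one to use two different regularizations of the event, and the compactness of $\cC$ in Hausdorff distance is what makes the closed regularization work. The residual gap between upper and lower bound is exactly the possible left-discontinuity of $D_\cC$ at $r$, and is recorded faithfully in the statement via $\lim_{\tau\uparrow r}D_\cC(\tau)$; no further regularity of $D_\cC$ is needed.
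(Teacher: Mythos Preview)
Your proposal is correct and follows essentially the same approach as the paper: apply the Gaussian LDP to $u^{-1}\BX$, use the compactness of $\cC$ in the product Hausdorff metric to bound the closure by the weak-inequality set for the upper bound, and use a rescaling perturbation to pass from weak to strict inequalities for the lower bound. The only cosmetic difference is that the paper works directly with the open set $G(r)$ and a single scaling parameter $\vep$ (obtaining $(1+\vep)^2 D_\cC\bigl(\tfrac{1-\vep}{1+\vep}r\bigr)$ in one step), whereas you route the lower bound through the auxiliary sets $G_0(\tau)$ and an additional parameter $\tau'<\tau$, which requires a short monotonicity argument to collapse the double limit $\lim_{\tau\uparrow r}\lim_{\tau'\uparrow\tau}D_\cC(\tau')$ to $\lim_{\tau\uparrow r}D_\cC(\tau)$.
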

\begin{proof}
As is usual in large deviations arguments, we write,  for $u>0$, 
$$
\Psi_{\cC}(u;  r)  =
P\bigl( u^{-1}\BX \in A\bigr)\,, 
$$
where $A$ is the open subset of $C(T)$  given by
$$
A \definedby \Bigl\{ \bomega\in C(T):\  \text{ there is $(K_1,K_2)\in \cC$
    such that  }
$$
$$
 \text{  $\omega(\bt)>1$ for each $\bt\in K_1$  and $\omega(\bt)<r $ for each
    $\bt\in K_2$}  \Bigr\}.  
$$ 
We use Theorem 3.4.5 in \cite{deuschel:stroock:1989}. We have 
\begin{equation} \label{e:LD.estimate}
-\inf_{\bomega\in A} I(\bomega) \leq \liminf_{u\to\infty}
\frac{1}{u^2} \log \Psi_{\cC}(u;  \tau)
\leq \limsup_{u\to\infty}
\frac{1}{u^2} \log \Psi_{\cC}(u;  \tau)
\leq -\inf_{\bomega\in \bar A}
I(\bomega)\,. 
\end{equation}
By Theorem 3.4.12 of
\cite{deuschel:stroock:1989}  the rate function $I$  can be written as
\begin{equation} \label{e:rate.function}
I(\bomega) = \left\{ \begin{array}{ll}
\frac12 \| \bomega\|_\cH^2 & \text{if $\bomega\in \cH$}, \\
\infty  & \text{if $\bomega\notin \cH$,}
\end{array} \right.
\end{equation}
for $\bomega\in C(T)$.  Since $\cC$ is compact in the product Hausdorff distance, 
$$
\bar A \subseteq \Bigl\{ \bomega\in C(T):\  \text{ there is $(K_1,K_2)\in \cC$
    such that  }
$$
$$
 \text{  $\omega(\bt)\geq 1$ for each $\bt\in K_1$  and
   $\omega(\bt)\leq r $ for each     $\bt\in K_2$}  \Bigr\}, 
$$ 
and so  \eqref{e:LD.estimate} already
contains the upper  limit statement in \eqref{e:LD.limit}. Further, 
for any $0<\vep<1$,
\beqq
\inf_{\bomega\in  A} I(\bomega) \leq \inf\left\{
EH^2 \: H\in {\mathcal L},\
   \text{and, for some  $(K_1,K_2)\in \cC$, }\right.
\eeqq
\beqq
\left. \text{
  $\omega_H(\bt)\geq 1+\vep$ for each $\bt\in K_1$  and
  $\omega_H(\bt)\leq (1-\vep)r $ for each 
    $\bt\in K_2$}  \right\}
\eeqq
$$
= (1+\vep)^2 D_{\cC}\left( \frac{1-\vep}{1+\vep}r\right)\,.
$$
Letting $\vep\downarrow 0$ establishes 
the lower limit statement in \eqref{e:LD.limit}.
\end{proof}

The lower bound in \eqref{e:LD.limit} can be strictly smaller than the
upper bound, as the following example shows. 
 We will see in the sequel that in certain cases of interest the two
 bounds do coincide. 
\begin{example} \label{ex:not.equal}
{\rm 
Let $T=\{ 0,1,2\}$. Starting with independent standard normal random
variables $Y_1, Y_2$ we define, for $0<r_0<1$ and $\sigma>r_0$, 
$$
X(0)=Y_1, \ X(1) = r_0Y_1, \ X(2) = \sigma Y_1+Y_2\,.
$$
Note that in this case ${\mathcal L} = \{ a_1Y_1+a_2Y_2, \
a_1\in\bbr, \, a_2\in\bbr\}$. 

Let $\cC = \bigl\{ \bigl( \{ 0\}, \{ 1\}\bigr), \, \bigl( \{ 0\}, \{
2\}\bigr)\bigr\}$. It is elementary to check that 
$$
D_{\cC}(r) = \left\{
\begin{array}{ll}
1+ (\sigma-r)^2 & \text{for $0<r<r_0$,} \\
1 & \text{for $r\geq r_0$,}
\end{array}
\right.
$$
and that this function is not left continuous at $r=r_0$. 
}
\end{example}

For a fixed pair $(K_1,K_2)\in \cC$ denote 
\begin{equation} \label{e:fixed.pair}
D_{K_1,K_2}(r)  =\inf\left\{
EH^2 \: H\in {\mathcal L}\
   \text{such that} \right.
\end{equation}
$$
\left. \text{
  $E\bigl(X(\bt)H\bigr)\geq 1$ for each $\bt\in K_1$  and $E\bigl(
X(\bt)H\bigr)\leq r $ for each
    $\bt\in K_2$}  \right\}.
$$
Clearly,
\begin{equation} \label{e:connect.R}
D_{\cC}(r) = \min_{(K_1,K_2)\in \cC} D_{K_1,K_2}(r) \,,
\end{equation}
with the minimum actually achieved. Furthermore, an application of
Theorem \ref{t:LDP.path} to the case of $\cC$ consisting of a single
ordered pair of sets immediately shows that
\beq \label{e:ldp:single}
-\frac12 \lim_{\tau\uparrow r}D_{K_1,K_2}(\tau)  &&\leq 
\liminf_{u\to\infty} \frac{1}{u^2} \log \Psi_{K_1,K_2}(u;  r)
\nonumber \\
&&\leq \limsup_{u\to\infty} \frac{1}{u^2} \log \Psi_{K_1,K_2}(u;  r) 
\leq -\frac12 D_{K_1,K_2}(r)\,,
\eeq
where
$$
\Psi_{K_1,K_2}(u;r) = P\left( \text{  $X(\bt)>u$ for
    all $\bt\in K_1$  and $X(\bt)<r u$ for all  $\bt\in
    K_2$.}\right) 
$$

The next result describes  useful properties of the function $D_{K_1,K_2}$. 
\begin{theorem} \label{t:dual.repr} 
  (a)\ If the feasible set in \eqref{e:fixed.pair} is not empty, then
the infimum is achieved, at a unique $H\in {\mathcal L}$. 

(b) \ The following holds true:
\begin{equation} \label{e:dual}
D_{K_1,K_2}(r) = \left\{ \min \left[ \min_{\mu_1\in M_1^+(K_1)}
\int_{K_1}\int_{K_1} R_\BX( \bt_1,\bt_2)\, \mu_1(d\bt_1)\mu_1(d\bt_2), \
\right. \right.
\end{equation}
$$
\left. \left.
\min_{\mu_1\in M_1^+(K_1), \mu_2\in M_1^+(K_2) \atop 
\text{\rm subject to \eqref{e:cond.1}}}
\frac{A_{K_1,K_2}(\mu_1,\mu_2)}{B_{K_1,K_2}(\mu_1,\mu_2; r)}
\right]\right\}^{-1}
$$
with 
$$
A_{K_1,K_2}(\mu_1,\mu_2) =
$$
$$
 \int_{K_1}\int_{K_1} R_\BX( \bt_1,\bt_2)\,
 \mu_1(d\bt_1)\mu_1(d\bt_2)  \int_{K_2}\int_{K_2} R_\BX( \bt_1,\bt_2)\,
\mu_2(d\bt_1)\mu_2(d\bt_2) 
$$
$$
 - \left( \int_{K_1}\int_{K_2} R_\BX(
\bt_1,\bt_2)\, \mu_1(d\bt_1)\mu_2(d\bt_2)\right)^2,
$$

$$
B_{K_1,K_2}(\mu_1,\mu_2; r)= 
$$
$$
r^2 \int_{K_1}\int_{K_1} R_\BX( \bt_1,\bt_2)\,
\mu_1(d\bt_1)\mu_1(d\bt_2) -2r \int_{K_1}\int_{K_2} R_\BX(
\bt_1,\bt_2)\, \mu_1(d\bt_1)\mu_2(d\bt_2)
$$
$$
 +  \int_{K_2}\int_{K_2} R_\BX( \bt_1,\bt_2)\,
\mu_2(d\bt_1)\mu_2(d\bt_2),
$$
and the condition in the minimization problem is 
\begin{equation} \label{e:cond.1}
\int_{K_1}\int_{K_2} R_\BX(
\bt_1,\bt_2)\, \mu_1(d\bt_1)\mu_2(d\bt_2) \geq r  \int_{K_1}\int_{K_1}
R_\BX( \bt_1,\bt_2)\, 
\mu_1(d\bt_1)\mu_1(d\bt_2)\,.
\end{equation}
\end{theorem}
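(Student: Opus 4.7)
For part (a), the plan is a direct application of the Hilbert projection theorem. For each fixed $\bt\in T$, the evaluation map $\mathcal L\ni H\mapsto E(X(\bt)H)$ is a bounded linear functional (with norm $R_\BX(\bt,\bt)^{1/2}$ by Cauchy--Schwarz), so each of the pointwise inequalities in \eqref{e:fixed.pair} defines a closed half-space in $\mathcal L$. The feasible set is the intersection of these half-spaces as $\bt$ ranges over $K_1\cup K_2$, hence closed and convex. When it is nonempty, the strictly convex coercive functional $H\mapsto EH^2$ attains a unique minimum on it, namely the projection of the origin.

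For part (b), I would attack the representation via Lagrange duality, splitting into a weak-duality lower bound and a strong-duality/attainment upper bound. For weak duality: given $\mu_1\in M_1^+(K_1)$, set $Y_1=\int_{K_1}X(\bt)\mu_1(d\bt)\in\mathcal L$, integrate the constraint $E(X(\bt)H)\geq 1$ against $\mu_1$ to obtain $E(Y_1H)\geq 1$, and apply Cauchy--Schwarz to get $EH^2\geq 1/\int\!\!\int R_\BX(\bs,\bt)\mu_1(d\bs)\mu_1(d\bt)$. For a pair $(\mu_1,\mu_2)$ satisfying \eqref{e:cond.1}, any feasible $H$ also satisfies $E(Y_1H)\geq 1$ and $E(Y_2H)\leq r$ with $Y_2=\int_{K_2}X(\bt)\mu_2(d\bt)$; minimizing $\|\alpha Y_1-\beta Y_2\|^2$ over $\mathrm{span}(Y_1,Y_2)$ with both constraints active is a $2\times 2$ linear system in $(\alpha,\beta)$, whose determinant is $A_{K_1,K_2}(\mu_1,\mu_2)$, and a direct computation yields $EH^2\geq B_{K_1,K_2}(\mu_1,\mu_2;r)/A_{K_1,K_2}(\mu_1,\mu_2)$ provided the multipliers are nonnegative, which is exactly \eqref{e:cond.1}. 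Taking the supremum over $\mu_1$ and over feasible $(\mu_1,\mu_2)$ gives the inequality $D_{K_1,K_2}(r)\geq (\text{RHS of }\eqref{e:dual})$.

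For the matching upper bound, let $H^*$ denote the minimizer from (a). Invoking Lagrange-multiplier theory for convex programs with cone constraints in a Banach space -- the order cones of $C(K_1)$ and $C(K_2)$ have as duals the cones of nonnegative Radon measures by Riesz representation -- produces finite nonnegative Borel measures $\lambda_1$ on $K_1$ and $\lambda_2$ on $K_2$ with
$$
H^* = \int_{K_1}X(\bt)\lambda_1(d\bt) - \int_{K_2}X(\bt)\lambda_2(d\bt),
$$
together with complementary slackness: $E(X(\bt)H^*)=1$ for $\lambda_1$-a.e.\ $\bt$ and $E(X(\bt)H^*)=r$ for $\lambda_2$-a.e.\ $\bt$. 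A case split finishes: if $\lambda_2\equiv 0$, normalizing $\mu_1=\lambda_1/\lambda_1(K_1)$ and integrating gives $\|H^*\|^2 = 1/\int\!\!\int R_\BX\,\mu_1\otimes\mu_1$; otherwise, after normalizing both measures the KKT equations recover $\|H^*\|^2 = B_{K_1,K_2}(\mu_1,\mu_2;r)/A_{K_1,K_2}(\mu_1,\mu_2)$, and \eqref{e:cond.1} follows automatically from the nonnegativity of the multiplier $\lambda_2$. A short check using $b^2\leq ac$ further shows that \eqref{e:cond.1} alone suffices to guarantee nonnegativity of the other multiplier too.

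The principal obstacle is securing the nonnegative multiplier measures $\lambda_1,\lambda_2$ in the infinite-dimensional setting. The cleanest route is to verify Slater's condition (existence of some $H$ strictly fulfilling every pointwise inequality) and invoke a Banach-space convex duality theorem such as in Luenberger, Chapter 8; the compactness of $K_1,K_2$ together with continuity of $R_\BX$ identifies the dual multipliers with nonnegative Radon measures. Where Slater fails, I would fall back on first restricting to the compact active-constraint sets $K_i^*=\{\bt\in K_i:E(X(\bt)H^*)\text{ saturates the bound}\}$, apply an equality-constrained Lagrange theorem there, and then verify nonnegativity of the resulting multipliers by the variational inequality characterizing $H^*$.
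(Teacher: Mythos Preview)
Your overall architecture---Lagrange duality in $C(K_1)\times C(K_2)$ with Radon-measure multipliers via Riesz, followed by reduction to a two-parameter quadratic problem---is the same as the paper's. Part (a) is fine; your projection-theorem argument is in fact cleaner than the paper's Banach--Alaoglu extraction.

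There are, however, two places where your strong-duality step is incomplete and the paper does something you did not anticipate.

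\textbf{Slater without Slater.} Your proposed fallback when no strictly feasible $H$ exists (restrict to the active sets $K_i^*$, invoke an equality-constrained multiplier theorem, then argue sign) is not a standard theorem and would need substantial justification in infinite dimensions. The paper sidesteps the issue entirely: if the feasible set at level $r$ is nonempty, then any feasible $H$ is \emph{strictly} feasible at every level $z>r$, so Slater holds automatically for the perturbed problem. One derives the dual representation \eqref{e:Rz.dual} for all $z>r$, and then shows $D_{K_1,K_2}(z)\to D_{K_1,K_2}(r)$ as $z\downarrow r$ by another Banach--Alaoglu argument (weak subsequential limits of the optimizers $H_z$ are feasible at $r$). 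Since the dual side is manifestly monotone and continuous in $z$, the identity passes to the limit. This perturbation trick is both simpler and more robust than your active-set fallback.

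\textbf{The infeasible case.} Your argument for the matching upper bound begins ``let $H^*$ denote the minimizer from (a)'', so it says nothing when the feasible set is empty. In that case $D_{K_1,K_2}(r)=\infty$, and one must show the bracketed minimum in \eqref{e:dual} equals $0$, i.e.\ the dual problem \eqref{e:Rz.dual} is unbounded. Weak duality gives only $\infty\geq\text{RHS}$, which is vacuous. The paper handles this via the linear-programming duality theory over topological vector spaces in Anderson (1983): one recasts \eqref{e:fixed.pair} as a conic LP, observes the dual is always consistent, and shows that primal \emph{subconsistency} with finite subvalue would (again by Banach--Alaoglu) manufacture a genuinely feasible $H$, a contradiction. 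You should either reproduce this argument or supply an alternative proof that infeasibility of the primal forces unboundedness of the dual.
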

\begin{proof}
For part (a), let $(H_n)\subset {\mathcal L}$ be a sequence of
elements satisfying the constraints in \eqref{e:fixed.pair} such that 
$EH_n^2\to D_{K_1,K_2}(r)$ as $n\to\infty$. By the Banach - Alaoglu
theorem (see e.g. Theorem 2, p. 424 in  
\cite{dunford:schwartz:1988}), the sequence
$(H_n)$ is weakly relatively compact in $\mathcal L$, and so there is
$H\in {\mathcal L}$  and a subsequence $n_k\to\infty$ 
such that $E(H_{n_k}Y)\to E(HY)$ as $k\to\infty$ for each $Y\in
{\mathcal L}$. Further, $EH^2\leq \liminf_{k\to\infty} EH_{n_k}^2=
D_{K_1,K_2}(r)$. Therefore, $H$ is an optimal solution to the problem
\eqref{e:fixed.pair}. The uniqueness of $H$ follows from the convexity
of the norm. 

For part (b) we will  use the Lagrange duality approach of Section 8.6 in
\cite{luenberger:1969}. Let ${\tt Z}=C(K_1) \times C(K_2)$, which we
equip with the norm $\| (\varphi_1,\varphi_2)\|_{\tt Z} = \max\bigl(
\|\varphi_1\|_{C(K_1)}, \|\varphi_2\|_{C(K_2)}\bigr)$. Consider the 
closed convex cone in $\tt Z$ defined by ${\tt P} = \{
(\varphi_1,\varphi_2):\, \varphi_i(t) \geq 0$ for all $t\in K_i, \
i=1,2\}$. Its dual cone, which is a subset of ${\tt Z}^*$, can be
identified with $M^+(K_1)\times M^+(K_2)$, under the action 
$$
(\mu_1,\mu_2)\bigl( (\varphi_1,\varphi_2)\bigr) = \int_{K_1} \varphi_1\, d\mu_1+
 \int_{K_2} \varphi_2\, d\mu_2
$$
for a finite measure $\mu_i$ on $K_i$, $i=1,2$. Define a convex
mapping $G:\, {\mathcal L} \to   {\tt Z}$ by
$$
G(H) =   \Bigl( \bigl( 1-w_H(\bt),\, \bt\in K_1\bigr), \ \bigl(
  w_H(\bt)-r, \, \bt\in K_2\bigr)\Bigr).
$$
We can write
\begin{equation} \label{e:primal}
\bigl( D_{K_1,K_2}(r)\bigr)^{1/2} =\inf\left\{ (EH^2)^{1/2}\, : H\in {\mathcal L}, \ G(H)\in
  -{\tt P}\right\}\,.
\end{equation} 

We start with the assumption that the feasible set in
\eqref{e:fixed.pair} and \eqref{e:primal} is not empty. Let $z>r$, and
consider the optimization problems \eqref{e:fixed.pair} and
\eqref{e:primal} for $D_{K_1,K_2}(z)$. The feasible set in these
problems has now an interior point, and in this case
Theorem 1 (p. 224) in \cite{luenberger:1969} applies. 
We conclude that
\begin{equation} \label{e:optimize.lagrange}
\bigl( D_{K_1,K_2}(z)\bigr)^{1/2} = \max_{\mu_1\in M^+(K_1),\, 
  \mu_2\in M^+(K_2)}
\inf_{H\in {\mathcal L}} \biggl[ (EH^2)^{1/2} 
\end{equation}
$$
+ \int_{K_1} \bigl(
  1-w_H(\bt)\bigr)\, \mu_1(d\bt) + \int_{K_2} \bigr( w_H(\bt)-z\bigr)\,
  \mu_2(d\bt)\biggr]\,,
$$
and the ``max'' notation is legitimate, because the maximum is, in
fact, achieved. For $i=1,2$ and $\mu_i\in M^+(K_i)$ denote by
$\|\mu_i\|$ its total mass, and by $\hat \mu_i\in M_1^+(K_i)$ the
normalized measure $\hat \mu_i=\mu_i/\|\mu_i\|$ (if $\|\mu_i\|=0$, we
use for $\hat \mu_i$ an arbitrary fixed probability measure in
$M^+(K_i)$). Then 
$$
\bigl( D_{K_1,K_2}(z)\bigr)^{1/2} = \max_{\mu_1\in M^+(K_1),\, 
  \mu_2\in M^+(K_2)} \biggl\{ \|\mu_1 \| -z \|\mu_2\|
$$
$$
+ \inf_{H\in {\mathcal L}} \left[ (EH^2)^{1/2} 
- \|\mu_1 \|\int_{K_1} 
 w_H(\bt) \, \hat \mu_1(d\bt) + \|\mu_2\|\int_{K_2}   w_H(\bt) \,
 \hat \mu_2(d\bt)\right]\biggr\}\,.
$$
Note that for fixed $\mu_i\in M^+(K_i)$, $i=1,2$ we have
$$
\inf_{H\in {\mathcal L}} \left[ (EH^2)^{1/2} 
- \|\mu_1 \|\int_{K_1} 
 w_H(\bt) \, \hat \mu_1(d\bt) + \|\mu_2\|\int_{K_2}   w_H(\bt) \,
 \hat \mu_2(d\bt)\right]
$$
$$ 
= \inf_{a\geq 0} a\biggl\{ 1-\sup_{H\in {\mathcal L},\, EH^2=1} 
\left[ \|\mu_1 \|\int_{K_1} 
 w_H(\bt) \, \hat \mu_1(d\bt) - \|\mu_2\|\int_{K_2}   w_H(\bt) \,
 \hat \mu_2(d\bt)\right]\biggr\}
$$
$$
= \left\{ \begin{array}{ll}
0 & \text{if} \ \sup_{H\in {\mathcal L},\, EH^2=1}  [ \ldots
 ]\leq 1 \\
-\infty & \text{if} \ \sup_{H\in {\mathcal L},\, EH^2=1}  [ \ldots
 ]> 1 
\end{array} \right..
$$
Therefore,
$$
\begin{array}{ll}
\bigl( D_{K_1,K_2}(z)\bigr)^{1/2} = &
\max_{\mu_1\in M^+(K_1),\, 
  \mu_2\in M^+(K_2)}   \ \ \bigl(  \|\mu_1 \| -z \|\mu_2\|\bigr) \\
\\ 
 &\text{subject to}
\end{array}
$$
$$
 \sup_{H\in {\mathcal L},\, EH^2=1} 
\left[ \|\mu_1 \|\int_{K_1} 
 w_H(\bt) \, \hat \mu_1(d\bt) - \|\mu_2\|\int_{K_2}   w_H(\bt) \,
 \hat \mu_2(d\bt)\right] \leq 1\,. 
$$
Note that by the reproducing property, for fixed $\mu_1\in M^+(K_1),\, 
  \mu_2\in M^+(K_2)$,
$$
\sup_{H\in {\mathcal L},\, EH^2=1} 
\left[ \|\mu_1 \|\int_{K_1} 
 w_H(\bt) \, \hat \mu_1(d\bt) - \|\mu_2\|\int_{K_2}   w_H(\bt) \,
 \hat \mu_2(d\bt)\right] 
$$
$$ 
= \sup_{w\in \cH,\, \|w\|_\cH=1} \left( w, \,
\|\mu_1 \|\int_{K_1} 
 R_\bt(\cdot) \, \hat \mu_1(dt) - \|\mu_2\|\int_{K_2}  R_\bt(\cdot) \,
 \hat \mu_2(dt) \right) _\cH \,.
$$
Assuming that the element in the second position in the inner product
is nonzero, the supremum is achieved at that element scaled to have a
unit norm. Therefore, value of the supremum is
$$
\left\| \|\mu_1 \|\int_{K_1} 
 R_\bt(\cdot) \, \hat \mu_1(dt) - \|\mu_2\|\int_{K_2}  R_\bt(\cdot) \,
 \hat \mu_2(dt) \right\|_\cH\,,
$$
which is also trivially the case if the element in the second position in
the inner product is the zero element. In any case, using the
definition of the norm in $\cH$, we conclude that
$$
\bigl( D_{K_1,K_2}(z)\bigr)^{1/2} = 
\max_{m_1\geq 0, \, m_2\geq 0}\max_{\mu_1\in M_1^+(K_1),\, 
  \mu_2\in M_1^+(K_2)}   \ \ \bigl(  m_1 -z\, m_2\bigr) 
$$
\begin{equation} \label{e:Rz.dual}
\hskip -1.1in \text{subject to}
\end{equation}
$$
m_1^2 \int_{K_1}\int_{K_1} R_\BX( \bt_1,\bt_2)\,
\mu_1(d\bt_1)\mu_1(d\bt_2) -2m_1m_2 \int_{K_1}\int_{K_2} R_\BX(
\bt_1,\bt_2)\, \mu_1(d\bt_1)\mu_2(d\bt_2)
$$
$$
+ m_2^2 \int_{K_2}\int_{K_2} R_\BX( \bt_1,\bt_2)\,
\mu_2(d\bt_1)\mu_2(d\bt_2) \leq 1\,.
$$

Next, we show that \eqref{e:Rz.dual} holds for $z=r$ as well. Let
$A(z), \ z\geq r$ be the value of the maximum in the right hand side
of \eqref{e:Rz.dual}. We know that $D_{K_1,K_2}(z) = A(z)^2$ for
$z>r$. Moreover, it is clear that $A(z)\uparrow A(r)$ as $z\downarrow
r$. Therefore, in order to extend \eqref{e:Rz.dual} to $z=r$ it is
enough to prove that 
\begin{equation} \label{e:R.rcts}
\lim_{z\downarrow r} D_{K_1,K_2}(z) =D_{K_1,K_2}(r)\,.
\end{equation}
To this end, choose a sequence $z_n\downarrow r$. For $n\geq 1$ there
is, by part (a), the optimal solution $H_n$ of the problem
\eqref{e:fixed.pair} corresponding to $z_n$.  
Appealing to the  Banach-Alaoglu theorem, we see that the sequence
$(H_n)$ is weakly relatively compact in $\mathcal L$, and so there is
$H\in {\mathcal L}$  to which it converges weakly along a subsequence. This $H$ is,
clearly, feasible in \eqref{e:fixed.pair} for $z=r$.  Furthermore, 
$EH^2\leq  \lim_{n\to\infty } D_{K_1,K_2}(z_n)$, implying that
$D_{K_1,K_2}(r)\leq  \lim_{z\downarrow r}
D_{K_1,K_2}(z)$, thus giving us  the only nontrivial
inequality in \eqref{e:R.rcts}. Therefore, \eqref{e:Rz.dual} holds for
$z=r$. 

A part of the optimization problem in \eqref{e:Rz.dual} with $z=r$ has
the form
$$
\max_{m_1\geq 0, \, m_2\geq 0} \ \ \bigl(  m_1 -r\, m_2\bigr) 
$$
\begin{equation} \label{e:sep.prob}
\hskip -1.1in \text{subject to}
\end{equation}
$$
am_1^2 - 2bm_1m_2 + cm_2^2\leq 1
$$
for fixed numbers $a\geq 0, \, c\geq 0$ and $b\in \bbr$. In our case,
$$
a= \int_{K_1}\int_{K_1} R_\BX( \bt_1,\bt_2)\,
\mu_1(d\bt_1)\mu_1(d\bt_2),  \ b = \int_{K_1}\int_{K_2} R_\BX(
\bt_1,\bt_2)\, \mu_1(d\bt_1)\mu_2(d\bt_2)
$$
and 
\begin{equation} \label{e:abc}
c= \int_{K_2}\int_{K_2}
R_\BX( \bt_1,\bt_2)\, \mu_2(d\bt_1)\mu_2(d\bt_2) \,.
\end{equation}
These specific numbers satisfy the condition
\begin{equation} \label{e:abc}
b^2\leq ac\,,
\end{equation}
and we will assume that this condition holds in the problem
\eqref{e:sep.prob} we will presently consider. 

As a first step, it is clear that replacing the inequality constraint
in this problem by the equality constraint
$$
a_2m_1^2 - 2bm_1m_2 + cm_2^2=1
$$
does not change the value of the maximum, so we may work with the
equality constraint instead. The resulting problem can be easily
solved, e.g. by checking the boundary values $m_1=0$ or $m_2=0$, and
using the Lagrange multipliers if both $m_1>0$ and $m_2>0$. The
resulting value of the maximum in this problem is
\begin{equation} \label{e:max.value} 
\begin{array}{ll}
a^{-1/2} & \text{if} \ \ b\leq r a  \\
\left(\frac{c+ r^2a - 2 r b}{ac-b^2}\right)^{1/2} & \text{if} \ \
b> r a  
\end{array}.
\end{equation}
Moreover, it is elementary to check that we always have
$$
\left(\frac{c+r^2a - 2r b}{ac-b^2}\right)^{1/2} \geq
\frac{1}{a^{1/2}}\,.
$$
Substituting \eqref{e:max.value} into \eqref{e:Rz.dual} with $z=r$ and
using the values of $a, b, c$ given in \eqref{e:abc} gives us the
representation \eqref{e:dual}. 

It remains to consider the case when the feasible set in
\eqref{e:fixed.pair} and \eqref{e:primal} is empty. In this case
$D_{K_1,K_2}(r)=\infty$, so we need to prove that the optimal value in
the dual problem \eqref{e:Rz.dual} (with  $\max$ replaced by 
$\sup$ in the statement) is infinite as well. For this purpose we use
the idea of subconsistency in Section 3 of \cite{anderson:1983}. We
write the minimization problem \eqref{e:fixed.pair} as a linear
program with conic constraints, called $IP$ in that paper, with the
following 
parameters. The space $X=\bbr\times {\mathcal L}$ is in duality with
the itself, $Y=X$. The space $Z=C(K_1) \times C(K_2)$ (as above) is in
duality with the space $W=M(K_1)\times M(K_2)$, the product of the
appropriate spaces of finite signed measures. The vector $c\in Y$ has
the unity as its $\bbr$ element, and the zero function as its
$\mathcal L$ element. The function $A:\, X\to Z$ is given by
$$
A(\alpha, H) = \Bigl( \bigl( E(HX(\bt), \, \bt\in K_1\bigr), \, \bigl(
E(HX(\bt)), \, \bt\in K_1\bigr)\Bigr), \ \alpha\in \bbr, \, H\in {\mathcal
  L}\,.
$$
The vector $b\in Z$ is given by a pair of continuous functions, the
first one takes the constant value of $1$ over $K_1$, while the second
one takes the constant value of $r$ over $K_2$. The positive cone $Q$
in $Z$ is defined by $Q= C_+(K_1) \times (-C_+(K_2))$, where
$C_+(K_i)$ is the subset of $C(K_i)$ consisting of nonnegative
functions, $i=1,2$. Finally, the positive cone $P$ in $X$ is defined
by 
$$
P=\bigl\{ (\alpha, H):\, \alpha\geq (EH^2)^{1/2}\bigr\}\,.
$$
It is elementary to verify that the dual problem $IP^*$ of
\cite{anderson:1983} coincides with
the maximization problem \eqref{e:Rz.dual}. 

Note that the dual problem is consistent  (has a feasible
solution). By Theorem 3 in Section 3 of \cite{anderson:1983} (see a
discussion at the end of that section), in order to prove that the
optimal value of the dual problem is infinite, we need to rule out the
possibility that the original (primal) problem is subconsistent with a
finite subvalue. With a view of obtaining a contradiction, assume the
subconsistency with a finite subvalue of the primal problem. Then
there are sequences $(x_n)\subset P$ and $(z_n)\subset Q$ such that 
$Ax_n-z_n\to b$ as $n\to\infty$ and the sequence of evaluations
$(c,x_n)$ is bounded from above. With the present parameters, this
means that there is a sequence $(H_n)\subset {\mathcal L}$ with the
bounded sequence $(EH_n^2)$ of the second moments and two sequences of
functions $(\varphi_{i,n})\subset C_+(K_i)$, $i=1,2$, such that,
weakly, 
$$
\bigl( E(H_nX(\bt))-\varphi_{1,n}(\bt), \, \bt\in K_1\bigr) \to \bigl( 1, \,
\bt\in K_1\bigr) \,,
$$
$$
\bigl( E(H_nX(\bt))+\varphi_{2,n}(\bt), \, \bt\in K_2\bigr) \to \bigl( r, \,
\bt\in K_2\bigr) 
$$
as $n\to\infty$, with the obvious notation for constant
functions. Appealing, once again, to  the Banach-Alaoglu theorem, we
find that there is $H\in {\mathcal L}$ such that, along a subsequence,
$H_n\to H$ weakly. Since weak convergence implies pointwise
convergence, we immediately conclude that $E\bigl(X(\bt)H\bigr)\geq 1$
for each $\bt\in K_1$  and $E\bigl( X(\bt)H\bigr)\leq r $ for each
$\bt\in K_2$, contradicting the assumption that the feasible set
\eqref{e:fixed.pair} is empty. The obtained contradiction completes the proof. 
\end{proof}

\begin{remark} \label{rk:m1m2}
{\rm
It is an easy calculation to verify that, in the optimization problem
\eqref{e:sep.prob}, the optimal solution $(m_1,m_2)$ has the following
properties. In the case $b\leq r a$ in \eqref{e:max.value}  one has
$m_2=0$, whereas if $b>ra$ in
\eqref{e:max.value}, then the numbers $m_1$ and $m_2$ are both
positive, and 
$$
\frac{m_1}{m_2} = \frac{rb-c}{ra-b}\,.
$$
We will find these properties useful in the sequel. 
}
\end{remark}

\begin{remark} \label{rk:cty}
{\rm
We saw in Example \ref{ex:not.equal} that the function $D_{\cC}$ does
not, in general, need to be  continuous. However, the arguments
used in the proof of Theorem \ref{t:dual.repr}, together with the
compactness in 
the product Hausdorff distance of the set $\cC$, show that this
function is always right continuous

For a fixed pair
$(K_1,K_2)\in \cC$ even the absence of left continuity  for the function
$D_{K_1,K_2}$ is, in a sense, an exception and not the rule. Left
continuity is trivially true at any $r_0$ for 
which the minimization problem \eqref{e:fixed.pair} is infeasible. If
that problem is feasible, and it remains feasible for some $r<r_0$,
then the left continuity at $r_0$ still holds. To see this, suppose
$r_n\uparrow r_0$ as $n\to\infty$ is such that for some $0<\vep<\infty$ 
\begin{equation} \label{e:prove.cty}
\lim_{n\to\infty} \bigl( D_{K_1,K_2}(r_n)\bigr)^{1/2}=
\bigl( D_{K_1,K_2}(r_0) \bigr)^{1/2}+\vep\,.
\end{equation}
 Let $H_n$
be optimal in \eqref{e:fixed.pair} for $r_n, \, n\geq 1$, and $H$ be
optimal for $r_0$. Define $\hat H_n= (H_n+H)/2$. Then, for some sequence
$k_n\to\infty$, $\hat H_n$ is feasible in \eqref{e:fixed.pair} for
$r_{k_n}$, and 
$$
\bigl( E\hat H_n^2\bigr)^{1/2} \leq \Bigl( \bigl( E H_n^2\bigr)^{1/2}  +
\bigl( E H^2\bigr)^{1/2} \Bigr)/2\,.
$$
Letting $n\to\infty$ we obtain 
$$
\limsup_{n\to\infty} \bigl( E\hat H_n^2\bigr)^{1/2} \leq \bigl(
D_{K_1,K_2}(r_0) \bigr)^{1/2}+\vep/2\,,
$$
which contradicts \eqref{e:prove.cty}. Hence the left continuity at $r_0$.

Left continuity fails at a point $r_0$ at which the minimization
problem \eqref{e:fixed.pair} is feasible, but becomes infeasible at
any $r<r_0$. An easy modification of Example \ref{ex:not.equal} can be
used to exhibit such a situation. 
}
\end{remark} 

As long as one is not in the last situation described in the example,
it follows from \eqref{e:ldp:single} that
$$
\lim_{u\to\infty} \frac{1}{u^2} \log \Psi_{K_1,K_2}(u;  r) 
= -\frac12 D_{K_1,K_2}(r)\,.
$$
In this connection there is a very natural interpretation of the
structure of the representation \eqref{e:dual} of $
D_{K_1,K_2}(r)$. Notice that 
$$
\lim_{u\to\infty} \frac{1}{u^2} \log P\left( \text{  $X(\bt)>u$ for
    all $\bt\in K_1$} \right)
$$
$$
= -\frac12 \left\{  \min_{\mu_1\in M_1^+(K_1)}
\int_{K_1}\int_{K_1} R_\BX( \bt_1,\bt_2)\, \mu_1(d\bt_1)\mu_1(d\bt_2)
\right\}^{-1}\,.
$$
This can be read off part (b) in Theorem \ref{t:dual.repr}, and it is 
also a simple extension of the results in
\cite{adler:moldavskaya:samorodnitsky:2014}. Therefore, we can
interpret the situation in which the first minimum in the right hand side
of \eqref{e:dual} is the smaller of the two minima, as implying that the
order of magnitude of the probability $\Psi_{K_1,K_2}(u;  r)$ is
determined, at least at the logarithmic level, by the requirement 
that $X(\bt)>u$ for all $\bt\in K_1$. In this case, the requirement 
that $X(\bt)<ru$ for all $\bt\in K_2$ does not change the logarithmic
behaviour of the probability. This is not be entirely unexpected
since the normal random variables in the set $K_2$ ``prefer'' not to
take very large values. 

On the other hand, if the correlations between the variables of the
random field  in the set $K_1$ and those in the set $K_2$ are
sufficiently strong, it may happen that, once it is true that
$X(\bt)>u$ for each $\bt\in K_1$, the correlations will make it
unlikely that we also have $X(\bt)<ru$ for all $\bt\in K_2$. In that
case the second minimum in the right hand side
of \eqref{e:dual} will be the smaller of the two minima.

The discussion in Example \ref{rk:cty} also leads to the following
conclusion of Theorem \ref{t:LDP.path}.
\begin{corollary} \label{c:LDP.precise}
Under the conditions of Theorem \ref{t:LDP.path}, suppose that there
is $(K_1^{(r)}, K_2^{(r)})\in \cC$ such that 
$$
D_{\cC}(r) = D_{K_1^{(r)},K_2^{(r)}}(r)<\infty\,, 
$$
and such that the optimization problem \eqref{e:fixed.pair} for the
pair $(K_1^{(r)}, K_2^{(r)})$ remains feasible in a neighborhood of
$r$. Then
\begin{equation} \label{e:really.limit}
\lim_{u\to\infty} \frac{1}{u^2} \log \Psi_{\cC}(u;  r) = -\frac12
D_{\cC}(r)\,.
\end{equation}
\end{corollary}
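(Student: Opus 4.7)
My plan is to derive \eqref{e:really.limit} by matching the general $\liminf$ and $\limsup$ bounds already furnished by Theorem \ref{t:LDP.path}. The $\limsup$ half of \eqref{e:LD.limit} gives
$$
\limsup_{u\to\infty} \frac{1}{u^2}\log \Psi_{\cC}(u;r) \leq -\frac12 D_{\cC}(r)
$$
for free, while the $\liminf$ half only yields $-\tfrac12\lim_{\tau\uparrow r} D_{\cC}(\tau)$. So the entire task reduces to proving the left continuity
$$
\lim_{\tau\uparrow r} D_{\cC}(\tau) = D_{\cC}(r)
$$
of the rate function at the specific level $r$ named in the hypothesis.

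First I would record the trivial direction. For any pair $(K_1,K_2)\in\cC$, enlarging the second-constraint threshold only loosens the feasible set in \eqref{e:fixed.pair}, so $\tau\mapsto D_{K_1,K_2}(\tau)$ is non-increasing; by \eqref{e:connect.R} the same is true for $D_{\cC}$. This monotonicity at once gives $\lim_{\tau\uparrow r} D_{\cC}(\tau) \geq D_{\cC}(r)$.

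For the reverse inequality I would invoke the distinguished pair $(K_1^{(r)},K_2^{(r)})$ supplied by the hypothesis. Since the feasible set of \eqref{e:fixed.pair} for this pair is nonempty in an entire neighborhood of $r$, the convex-combination argument of Remark \ref{rk:cty} applies verbatim and produces left continuity of $D_{K_1^{(r)},K_2^{(r)}}$ at $r$. Because $D_{\cC}(\tau)\leq D_{K_1^{(r)},K_2^{(r)}}(\tau)$ for every $\tau$ at which the right-hand side is defined, passing to the limit $\tau\uparrow r$ yields
$$
\limsup_{\tau\uparrow r} D_{\cC}(\tau) \leq \lim_{\tau\uparrow r} D_{K_1^{(r)},K_2^{(r)}}(\tau) = D_{K_1^{(r)},K_2^{(r)}}(r) = D_{\cC}(r),
$$
where the last equality is the hypothesis. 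Combining the two inequalities produces the required left continuity, and hence the displayed limit.

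The only substantive ingredient is thus the transfer of the left-continuity argument from Remark \ref{rk:cty} to the distinguished pair; the neighborhood-feasibility assumption is exactly what is needed to prevent the pathology exhibited in Example \ref{ex:not.equal}, where the feasible set collapses at $r_0$ and left continuity genuinely fails. Everything else is bookkeeping on the bounds of Theorem \ref{t:LDP.path}.
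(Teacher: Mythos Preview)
Your proposal is correct and follows essentially the same line as the paper's proof: reduce to showing left continuity of $D_{\cC}$ at $r$, then bound $D_{\cC}(\tau)\leq D_{K_1^{(r)},K_2^{(r)}}(\tau)$ and invoke the left-continuity argument of Remark~\ref{rk:cty} for the distinguished pair, which the feasibility hypothesis makes available. The paper's write-up is terser---it simply notes that only one inequality in the left-continuity statement is non-trivial---but the content is the same.
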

\begin{proof}
It follows from Theorem \ref{t:LDP.path} that we only need to show
that
\begin{equation} \label{e:statmnt1}
\lim_{\tau\uparrow r}D_{\cC}(\tau) = D_{\cC}(r)\,.
\end{equation}
However, by the assumption of feasibility,  as $\tau\uparrow r$, 
$$
D_{\cC}(\tau) \leq D_{K_1^{(r)}, K_2^{(r)}(\tau)} \to D_{K_1^{(r)}, K_2^{(r)}}(r) 
= D_{\cC}(r)\,,
$$
giving us the only non-trivial part of \eqref{e:statmnt1}. 
\end{proof}

It turns out that under certain assumptions, given that the event in
\eqref{e:two.sets} occurs,  the random field $u^{-1}\BX$ converges in
law, as $u\to\infty$, to a deterministic function on $T$, ``the most
likely shape of the field''. This is described in the following
result. 
\begin{theorem} \label{e:shape}
Under the conditions of Theorem \ref{t:LDP.path}, suppose that there
is a unique $(K_1^{(r)}, K_2^{(r)})\in \cC$ such that 
\begin{equation} \label{e:unique.pair}
D_{\cC}(r) = D_{K_1^{(r)},K_2^{(r)}}(r)<\infty\,, 
\end{equation}
and such that the optimization problem \eqref{e:fixed.pair} for the
pair $(K_1^{(r)}, K_2^{(r)})$ remains feasible in a neighborhood of
$r$. Then for any $\vep>0$, 
\begin{equation} \label{e:most.likely}
P\biggl( \sup_{\bt\in T} \left| \frac1u X(\bt) -
    x_\cC(\bt)\right|\geq \vep\bigg|  \text{ there is $(K_1,K_2)\in \cC$
    such that} 
\end{equation}
$$
\text{
$X(\bt)>u$ for each $\bt\in K_1$  and $X(\bt)<r u$ for each
    $\bt\in K_2$}\biggr)\to 0
$$
as $u\to\infty$. Here
$$
x_\cC(\bt) = E\left(  X(\bt)H\bigl( K_1^{(r)},K_2^{(r)}\bigr)\right), \, \bt\in T\,,
$$
and $H\bigl( K_1^{(r)},K_2^{(r)}\bigr)$ is the unique minimizer in the
optimization problem \eqref{e:fixed.pair} for the pair $(K_1^{(r)},
K_2^{(r)})$. 
\end{theorem}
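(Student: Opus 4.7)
\medskip

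\noindent\textbf{Proof proposal.}
The plan is to bound the conditional probability in \eqref{e:most.likely} by the ratio
$$
\frac{P\bigl(u^{-1}\BX\in A\cap B_\vep\bigr)}{P\bigl(u^{-1}\BX\in A\bigr)},
$$
where $A$ is the open set appearing in the proof of Theorem \ref{t:LDP.path} and $B_\vep=\{\bomega\in C(T):\,\sup_{\bt\in T}|\bomega(\bt)-x_\cC(\bt)|\geq\vep\}$ is closed. The denominator is handled by Corollary \ref{c:LDP.precise}: under the hypotheses \eqref{e:unique.pair} and the feasibility assumption, it equals $\exp\bigl(-\tfrac{u^2}{2}D_\cC(r)+o(u^2)\bigr)$. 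For the numerator I would apply the large deviations upper bound of Theorem 3.4.5 in \cite{deuschel:stroock:1989} to the closed set $\bar A\cap B_\vep$, reducing the whole matter to showing that
\begin{equation}\label{e:strict.gap}
\inf\bigl\{\tfrac12\|\bomega\|_\cH^2:\,\bomega\in \bar A\cap B_\vep\bigr\} \;>\; \tfrac12 D_\cC(r).
\end{equation}
Once \eqref{e:strict.gap} is established, the numerator decays strictly faster than the denominator and the ratio tends to $0$.

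The heart of the argument is \eqref{e:strict.gap}, and it rests on the twin uniqueness assumptions (of the pair $(K_1^{(r)},K_2^{(r)})$ and, via part (a) of Theorem \ref{t:dual.repr}, of the minimizer $H^*=H(K_1^{(r)},K_2^{(r)})$). Suppose for contradiction that there exist $\bomega_n=w_{H_n}\in\bar A\cap B_\vep$ with $\|w_{H_n}\|_\cH^2\downarrow D_\cC(r)$ and corresponding pairs $(K_1^{(n)},K_2^{(n)})\in\cC$ realizing (approximately) the constraints in the closure of $A$. Using compactness of $\cC$ in the product Hausdorff distance, I would pass along a subsequence so that $(K_1^{(n)},K_2^{(n)})\to (K_1^*,K_2^*)\in\cC$. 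The sequence $(H_n)\subset\mathcal L$ is bounded in $L^2$, so by Banach--Alaoglu, along a further subsequence $H_n\to H$ weakly in $\mathcal L$; pointwise this gives $w_{H_n}(\bt)\to w_H(\bt)$ for every $\bt$. Combined with the Hausdorff convergence of the pairs, $H$ is feasible in \eqref{e:fixed.pair} for $(K_1^*,K_2^*)$, and $EH^2\leq\liminf EH_n^2=D_\cC(r)\leq D_{K_1^*,K_2^*}(r)$. Equality forces $(K_1^*,K_2^*)=(K_1^{(r)},K_2^{(r)})$ by the uniqueness of the optimal pair, and then $H=H^*$ by the uniqueness of the minimizer. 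Hence $EH_n^2\to EH^2$, which upgrades weak convergence to strong $L^2$ convergence $H_n\to H^*$, i.e. $w_{H_n}\to x_\cC$ in $\cH$.

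Finally, strong $\cH$-convergence transfers to uniform convergence on $T$ via the reproducing property: for every $\bt\in T$,
$$
|w_{H_n}(\bt)-x_\cC(\bt)|=|(w_{H_n}-x_\cC,R_\bt)_\cH|\leq \|w_{H_n}-x_\cC\|_\cH\,\|R_\bt\|_\cH,
$$
and $\|R_\bt\|_\cH^2=R_\BX(\bt,\bt)$ is bounded on the compact set $T$ because the covariance of the continuous Gaussian field $\BX$ is continuous. Thus $\sup_{\bt\in T}|w_{H_n}(\bt)-x_\cC(\bt)|\to 0$, contradicting $\bomega_n\in B_\vep$ and establishing \eqref{e:strict.gap}.

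The main obstacle I anticipate is the compactness step: extracting a single subsequence along which the pairs $(K_1^{(n)},K_2^{(n)})$ converge Hausdorff, the witnesses $H_n$ converge weakly in $\mathcal L$, and the constraints pass cleanly to the limit — in particular verifying that the weak/pointwise limit satisfies the inequality constraints on the limit pair $(K_1^*,K_2^*)$ rather than just on the original $(K_1^{(n)},K_2^{(n)})$. This requires a joint continuity/equicontinuity argument for the functionals $\bt\mapsto w_{H_n}(\bt)$, which follows from the uniform bound on $\|H_n\|_{L^2}$ together with the uniform continuity of $R_\BX$ on the compact $T\times T$.
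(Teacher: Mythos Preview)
Your proposal is correct and follows essentially the same strategy as the paper: reduce via the large-deviations upper bound and Corollary~\ref{c:LDP.precise} to showing that the rate over $\bar A\cap B_\vep$ strictly exceeds $D_\cC(r)$, and derive this from the uniqueness of the optimal pair together with the uniqueness of the minimizer in Theorem~\ref{t:dual.repr}(a). The paper's write-up is terser---it simply asserts the existence of an optimal $H_\vep$ for the constrained problem \eqref{e:opt.eps} and notes that uniqueness forces $H_\vep=H(K_1^{(r)},K_2^{(r)})$, which is infeasible---whereas you spell out the compactness extraction and close the contradiction via the weak-to-strong upgrade and the RKHS bound $|w_{H_n}(\bt)-x_\cC(\bt)|\le\|H_n-H^*\|_{\mathcal L}\,R_\BX(\bt,\bt)^{1/2}$; this extra care is not wasted, since the existence of $H_\vep$ in the paper's argument tacitly relies on exactly the equicontinuity/weak-closedness you identify.
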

\begin{proof}
Using Theorem 3.4.5 in \cite{deuschel:stroock:1989}, we see that
$$
\limsup_{u\to\infty}
\frac{1}{u^2} \log P\biggl( \sup_{\bt\in T} \left| \frac1u X(\bt) -
    x_\cC(\bt)\right|\geq \vep \ \  \text{and there is $(K_1,K_2)\in \cC$
   } 
$$
$$
\text{ such that 
$X(\bt)>u$ for each $\bt\in K_1$  and $X(\bt)< r u$ for each
    $\bt\in K_2$}\biggr)
$$
$$
\leq -\frac12 D_\cC(r; \vep)\,,
$$
where 
\begin{equation}\label{e:opt.eps}
D_{\cC}(r;\vep)
=\inf\biggl\{
EH^2 \: H\in {\mathcal L},\ \sup_{\bt\in T} \left| E(X(\bt)H) -
    x_\cC(\bt)\right|\geq \vep 
\end{equation}
$$
   \text{and for some  $(K_1,K_2)\in \cC$, }
$$
$$
\text{
  $E\bigl(X(\bt)H\bigr)\geq 1$ for each $\bt\in K_1$  and $E\bigl(
X(\bt)H\bigr)\leq r $ for each
    $\bt\in K_2$}  \biggr\}.
$$
Therefore, the claim of the theorem will follow once we prove that 
$D_{\cC}(r;\vep)>D_{\cC}(r)$. Indeed, suppose that the two minimal
values coincide. Let $H_\vep$ be an optimal solution for the problem
\eqref{e:opt.eps}. Since $H\bigl( K_1^{(r)},K_2^{(r)}\bigr)$ is not
feasible for the latter problem, we know that  $H\bigl(
K_1^{(r)},K_2^{(r)}\bigr)\not= H_\vep$, while the two elements have
equal norms. Since $H_\vep$ is feasible for the problem
\eqref{e:optimization.big}, because of the assumed uniqueness of the
pair $(K_1^{(r)}, K_2^{(r)})$ in \eqref{e:unique.pair}, it must also
be feasible for the problem \eqref{e:fixed.pair} with this 
pair $(K_1^{(r)}, K_2^{(r)})$, hence optimal for that problem. This,
however, contradicts the uniqueness property in part (a) of Theorem
\ref{t:dual.repr}. 
\end{proof}

\section{Optimal measures} \label{sec:optimal.measures}

Theorem \ref{t:dual.repr} together with \eqref{e:connect.R} provide a
way to understand the asymptotic behaviour of the probability in
\eqref{e:LD.limit}.  The problem of finding the two minima in the
right hand side of \eqref{e:dual} is not always simple since it is
often unclear how to find the optimal probability measure(s) in these
optimization problems. In this section we provide some results that
are helpful for this task.

We start with the first minimization problem on the right hand side of
\eqref{e:dual}. In this case we can provide necessary and sufficient
condition for a probability measure to be optimal.
\begin{theorem} \label{t:condition.mu.1}
A probability measure $\mu\in M_1^+(K_1)$ is optimal in the
minimization problem
$$
\min_{\mu\in M_1^+(K_1)}
\int_{K_1}\int_{K_1} R_\BX( \bt_1,\bt_2)\, \mu(d\bt_1)\mu(d\bt_2)
$$
if and only if  
$$
\int_{K_1}\int_{K_1} R_\BX( \bt_1,\bt_2)\,
\mu(d\bt_1)\mu(d\bt_2) 
= \min_{\bt_2\in K_1} \int_{K_1} R_\BX( \bt_1,\bt_2)\,
\mu(d\bt_1)\,.
$$
\end{theorem}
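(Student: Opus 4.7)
The plan is to recognize this as the classical variational characterization of the equilibrium measure for the ``energy'' functional $F(\mu) := \int_{K_1}\int_{K_1} R_\BX(\bt_1,\bt_2)\,\mu(d\bt_1)\mu(d\bt_2)$, together with its associated ``potential'' $V_\mu(\bt) := \int_{K_1} R_\BX(\bs,\bt)\,\mu(d\bs)$. The two key structural facts that make everything go are (i) $F$ is a positive semidefinite quadratic form on the space of finite signed Borel measures on $K_1$, because $F(\sigma) = \mathrm{Var}\!\left(\int_{K_1} X(\bt)\,\sigma(d\bt)\right) \geq 0$ (the stochastic integral being well defined by continuity of $R_\BX$ on the compact set $K_1 \times K_1$), and (ii) $F(\mu) = \int V_\mu\,d\mu$, so by Fubini the claimed equality simply says $V_\mu$ achieves its minimum value $F(\mu)$ at $\mu$-almost every point.

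For necessity, I would take any competitor $\nu \in M_1^+(K_1)$, form the convex combination $\mu_\lambda = (1-\lambda)\mu + \lambda\nu$ for $\lambda \in [0,1]$, and expand
\[
F(\mu_\lambda) = F(\mu) + 2\lambda\!\left(\int V_\mu\,d\nu - F(\mu)\right) + \lambda^2 F(\nu - \mu).
\]
Optimality of $\mu$ forces the first-order term to be nonnegative for every $\nu$, i.e.\ $\int V_\mu\,d\nu \geq F(\mu)$. Specializing $\nu = \delta_{\bt_2}$ for arbitrary $\bt_2 \in K_1$ gives $V_\mu(\bt_2) \geq F(\mu)$ pointwise on $K_1$, and this combined with $\int V_\mu\,d\mu = F(\mu)$ yields the claimed identity $F(\mu) = \min_{\bt_2 \in K_1} V_\mu(\bt_2)$.

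For sufficiency, suppose $F(\mu) = \min_{\bt_2 \in K_1} V_\mu(\bt_2)$. For any $\nu \in M_1^+(K_1)$, write $\sigma = \nu - \mu$, which is a signed measure with $\sigma(K_1)=0$, and expand
\[
F(\nu) = F(\mu) + 2\int V_\mu\,d\sigma + F(\sigma) = F(\mu) + 2\!\left(\int V_\mu\,d\nu - F(\mu)\right) + F(\sigma).
\]
The assumption gives $\int V_\mu\,d\nu \geq \min_{\bt_2} V_\mu(\bt_2) = F(\mu)$, so the middle term is $\geq 0$. The last term $F(\sigma)$ is $\geq 0$ by the positive semidefiniteness noted above. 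Hence $F(\nu) \geq F(\mu)$, proving optimality.

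The main obstacle, such as it is, lies in the justification of fact (i): one needs to be sure that $F$ extends as a nonnegative quadratic form to signed measures, so that the decomposition $F(\mu + \sigma) = F(\mu) + 2\int V_\mu\,d\sigma + F(\sigma)$ is both algebraically valid and has a nonnegative residual. This is where the continuous covariance structure on the compact set $K_1$ is used; once this is established, the whole argument becomes the one-line convex optimization calculation above. I would note in passing that the ``only if'' direction can alternatively be read off from the Lagrange duality framework already deployed in the proof of Theorem~\ref{t:dual.repr} (taking $K_2 = \emptyset$), but the direct first-variation argument above is shorter and self-contained.
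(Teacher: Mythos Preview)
Your argument is correct. The paper itself does not give a proof of this theorem; it simply states that the result ``can be proved in the same manner as part (ii) of Theorem 4.3 in \cite{adler:moldavskaya:samorodnitsky:2014}'' and omits the details. Your first-variation argument --- perturb $\mu$ along directions $\delta_{\bt_2}-\mu$ to get the pointwise inequality $V_\mu\geq F(\mu)$, then use the quadratic expansion $F(\nu)=F(\mu)+2\bigl(\int V_\mu\,d\nu-F(\mu)\bigr)+F(\nu-\mu)$ together with the positive semidefiniteness $F(\sigma)=\Var\!\bigl(\int X\,d\sigma\bigr)\geq 0$ for the converse --- is exactly the standard proof of this classical equilibrium-measure characterization, and is almost certainly what the cited reference does as well. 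Your remark that the necessary direction could alternatively be extracted from the Lagrange-duality machinery of Theorem~\ref{t:dual.repr} is apt but, as you say, the self-contained argument is cleaner here.
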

This theorem can be proved in the same manner as part (ii) of Theorem
4.3 in \cite{adler:moldavskaya:samorodnitsky:2014}, so we do not
repeat the argument. 
 
Next, observe that if the constraint \eqref{e:cond.1} in the second
minimization problem in \eqref{e:dual} holds with equality, then 
$$
\int_{K_1}\int_{K_1} R_\BX( \bt_1,\bt_2)\, \mu_1(d\bt_1)\mu_1(d\bt_2)
= \frac{A_{K_1,K_2}(\mu_1,\mu_2)}{B_{K_1,K_2}(\mu_1,\mu_2; r)}\,,
$$
so it is of  particular interest  to consider optimality of $\mu_1\in
M_1^+(K_1)$ and $\mu_2\in M_1^+(K_2)$ for the second
minimization problem in \eqref{e:dual} when the inequality in
\eqref{e:cond.1} is strict. It turns out that we can shed some light
on this question in an important special case, when one of the sets
$K_1$ or $K_2$ is a singleton. For the purpose of this discussion we
will assume that the set $K_2$ is a singleton.

Let, therefore, $K_2=\{ \bb\}$, for some $\bb\in\bbr^d$ such that
${\rm Var}(X(\bb))>0$.  In that case  the second optimization problem
in \eqref{e:dual} turns out to be of the form 
\begin{equation} \label{e:problem2}
\min_{\mu\in M_1^+(K_1)} \frac{\int_{K_1}\int_{K_1} R_\BX^{(1)}( \bt_1,\bt_2)\,
\mu(d\bt_1)\mu(d\bt_2)}{\int_{K_1}\int_{K_1} R_\BX^{(2)}( \bt_1,\bt_2)\,
\mu(d\bt_1)\mu(d\bt_2)}
\end{equation}
subject to 
\begin{equation} \label{e:condition.b}
\int_{K_1} R_\BX( \bt,\bb)\, \mu(d\bt)\geq r \int_{K_1} R_\BX( \bt_1,\bt_2)\,
\mu(d\bt_1)\mu(d\bt_2)\,,
\end{equation}
where
$$
R_\BX^{(1)}( \bt_1,\bt_2)= R_\BX( \bt_1,\bt_2) R_\BX(\bb,\bb) -
R_\BX( \bt_1,\bb)  R_\BX( \bt_2,\bb) 
$$
and 
$$
R_\BX^{(2)}( \bt_1,\bt_2)=r^2 R_\BX( \bt_1,\bt_2) -r\bigl( R_\BX(
\bt_1,\bb) +  R_\BX( \bt_2,\bb) \bigr) + R_\BX(\bb,\bb)\,.
$$
Notice that both $R_\BX^{(1)}$ and $R_\BX^{(2)}$ are nonnegative
definite, i.e. legitimate covariance functions on $T$. In fact, up to
the positive factor $ R_\BX(\bb,\bb) $, the function $R_\BX^{(1)}$ is
the conditional covariance function of the random field $\BX$ given
$X(\bb)$, while $R_\BX^{(2)}$ is the covariance function of the random
field 
$$
Y(\bt) = rX(\bt) - X(\bb), \, \bt\in T\,.
$$
This problem is a generalization of the first
optimization problem in \eqref{e:dual}, with the optimization of a single
integral of a covariance function replaced by the  optimization of a
ratio of the integrals of two covariance functions. 
 
The following result presents necessary conditions for optimality  in
the optimization problem \eqref{e:problem2}  of a measure for which
the constraint \eqref{e:condition.b} is satisfied as a strict
inequality. Note that the validity of the theorem does not depend on 
particular forms for $R_\BX^{(1)}$ and $R_\BX^{(2)}$. Observe that the
nonnegative definiteness of $R_\BX^{(1)}$ and $R_\BX^{(2)}$ means that
both the numerator and the denominator in \eqref{e:problem2} are
nonnegative. If the denominator vanishes at an optimal measure, then
the numerator must vanish as well (and the ratio is then determined via a
limiting procedure). In the theorem we assume that the denominator
does not vanish. 
\begin{theorem} \label{t:nec.cond2}
Let $\mu\in M_1^+(K_1)$ be such that \eqref{e:condition.b} holds as a strict
inequality. Let $\mu$ be optimal in
the optimization problem \eqref{e:problem2} and 
$$
\int_{K_1}\int_{K_1} R_\BX^{(2)}( \bt_1,\bt_2)\,
\mu(d\bt_1)\mu(d\bt_2)>0\,. 
$$
Then 
\begin{equation} \label{e:nec.cond}
 \int_{K_1} R_\BX^{(1)}( \bt_1, \bt)\,
\mu(d\bt_1) \int_{K_1}\int_{K_1} R_\BX^{(2)}( \bt_1, \bt_2)\,
\mu(d\bt_1) \mu(d\bt_2)
\end{equation}
$$
\geq \int_{K_1} R_\BX^{(2)}( \bt_1, \bt)\,
\mu(d\bt_1) \int_{K_1}\int_{K_1} R_\BX^{(1)}( \bt_1, \bt_2)\,
\mu(d\bt_1) \mu(d\bt_2)
 $$
for every $\bt\in K_1$. Moreover, \eqref{e:nec.cond} holds as as equality 
$\mu$-almost everywhere. 
\end{theorem}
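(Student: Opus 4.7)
The plan is to prove both conclusions by a standard one-sided perturbation argument inside the convex set $M_1^+(K_1)$, exploiting the fact that the constraint \eqref{e:condition.b} is strict at $\mu$ to guarantee that the perturbations remain feasible.

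For any $\bt\in K_1$ and $\epsilon\in(0,1)$, I would consider the probability measure
$$
\mu_\epsilon \;=\; (1-\epsilon)\mu + \epsilon\, \delta_\bt .
$$
Both sides of \eqref{e:condition.b}, evaluated at $\mu_\epsilon$, are polynomials in $\epsilon$ and in particular continuous at $\epsilon=0$; since the inequality in \eqref{e:condition.b} is strict at $\mu$, there exists $\epsilon_0>0$ (which I can take uniform in $\bt\in K_1$ by the compactness of $K_1$ and continuity of $R_\BX$, though this uniformity is not strictly needed) such that $\mu_\epsilon$ is feasible for all $\epsilon\in[0,\epsilon_0)$. Setting
$$
N(\nu)=\int_{K_1}\!\!\int_{K_1} R_\BX^{(1)}(\bt_1,\bt_2)\,\nu(d\bt_1)\nu(d\bt_2), \qquad
D(\nu)=\int_{K_1}\!\!\int_{K_1} R_\BX^{(2)}(\bt_1,\bt_2)\,\nu(d\bt_1)\nu(d\bt_2),
$$
and $A(\bt)=\int_{K_1} R_\BX^{(1)}(\bt_1,\bt)\,\mu(d\bt_1)$, $B(\bt)=\int_{K_1} R_\BX^{(2)}(\bt_1,\bt)\,\mu(d\bt_1)$, a direct expansion gives
$$
N(\mu_\epsilon)=(1-\epsilon)^2 N(\mu) + 2\epsilon(1-\epsilon)A(\bt) + \epsilon^2 R_\BX^{(1)}(\bt,\bt),
$$
and similarly for $D(\mu_\epsilon)$. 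Therefore
$$
\left.\frac{d}{d\epsilon}N(\mu_\epsilon)\right|_{\epsilon=0}=2\bigl(A(\bt)-N(\mu)\bigr),\qquad
\left.\frac{d}{d\epsilon}D(\mu_\epsilon)\right|_{\epsilon=0}=2\bigl(B(\bt)-D(\mu)\bigr).
$$

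Since $\mu$ is a minimizer and $D(\mu)>0$, the quotient $F(\nu)=N(\nu)/D(\nu)$ is differentiable along the curve $\epsilon\mapsto\mu_\epsilon$ at $0$, and one-sided optimality gives
$$
0 \;\leq\; \left.\frac{d}{d\epsilon}F(\mu_\epsilon)\right|_{\epsilon=0}
\;=\; \frac{2\bigl(A(\bt)-N(\mu)\bigr)D(\mu)-2N(\mu)\bigl(B(\bt)-D(\mu)\bigr)}{D(\mu)^2}
\;=\; \frac{2\bigl(A(\bt)D(\mu)-B(\bt)N(\mu)\bigr)}{D(\mu)^2}.
$$
This is exactly the pointwise inequality \eqref{e:nec.cond} for every $\bt\in K_1$.

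For the $\mu$-a.e. equality, I would integrate the function $\bt\mapsto A(\bt)D(\mu)-B(\bt)N(\mu)$ against $\mu$. By Fubini,
$$
\int_{K_1} A(\bt)\,\mu(d\bt)=N(\mu),\qquad \int_{K_1} B(\bt)\,\mu(d\bt)=D(\mu),
$$
so the $\mu$-integral of the nonnegative integrand vanishes; hence the integrand is $0$ for $\mu$-almost every $\bt$. The only subtlety, and what I anticipate to be the main obstacle, is to make sure the feasibility of $\mu_\epsilon$ for small $\epsilon>0$ really does follow from the strict inequality in \eqref{e:condition.b}: this reduces to continuity of two finite sums of integrals in $\epsilon$, so it is routine once one writes things out. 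The rest is pure first-order variational calculus.
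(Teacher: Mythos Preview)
Your argument is correct and complete. It is at heart the same first-order variational idea as the paper's proof, but you carry it out in a more elementary and self-contained way: the paper computes the Fr\'echet derivative of the ratio functional $\Psi(\eta)=N(\eta)/D(\eta)$ on $M(K_1)$, frames \eqref{e:problem2}--\eqref{e:condition.b} as a constrained optimization in the sense of Molchanov--Zuyev, checks that the strict inequality in \eqref{e:condition.b} makes $\mu$ a regular point, and then invokes their Theorem~3.1 to obtain \eqref{e:nec.cond} and the $\mu$-a.e.\ equality in one stroke. You instead restrict to the explicit one-parameter family $\mu_\epsilon=(1-\epsilon)\mu+\epsilon\delta_{\bt}$, observe that strictness in \eqref{e:condition.b} keeps $\mu_\epsilon$ feasible for small $\epsilon$, compute the one-sided derivative by hand, and then recover the equality part by integrating the resulting nonnegative function against $\mu$. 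The advantage of your route is that it avoids an external reference and any discussion of Fr\'echet differentiability on $M(K_1)$; the advantage of the paper's packaging is that it makes transparent how the result sits inside a general KKT-type theory for measure-valued optimization.
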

\begin{proof}
Let
$$
\Psi(\eta) = \frac{\int_{K_1}\int_{K_1} R_\BX^{(1)}( \bt_1,\bt_2)\,
\eta(d\bt_1)\eta(d\bt_2)}{\int_{K_1}\int_{K_1} R_\BX^{(2)}( \bt_1,\bt_2)\,
\eta(d\bt_1)\eta(d\bt_2)}
$$
for those $\eta\in M(K_1)$, the space of finite signed measures on
$K_1$ for which the denominator does not vanish. It is elementary to
check that $\Psi$ is Fr\'echet differentiable at every such point, in
particular at the optimal $\mu$ in the theorem. Its Fr\'echet
derivative at $\mu$ is given by
$$
D\Psi(\mu)[\eta] = \frac{2}{\left(\int_{K_1}\int_{K_1} R_\BX^{(2)}( \bt_1,\bt_2)\,
\mu(d\bt_1)\mu(d\bt_2)\right)^2}
$$
$$
\left( \int_{K_1}\int_{K_1} R_\BX^{(2)}( \bt_1,\bt_2)\,
\mu(d\bt_1)\mu(d\bt_2) \int_{K_1}\int_{K_1} R_\BX^{(1)}( \bt_1,\bt_2)\,
\mu(d\bt_1)\eta(d\bt_2) \right.
$$
$$
\left. 
- \int_{K_1}\int_{K_1} R_\BX^{(1)}( \bt_1,\bt_2)\,
\mu(d\bt_1)\mu(d\bt_2) \int_{K_1}\int_{K_1} R_\BX^{(2)}( \bt_1,\bt_2)\,
\mu(d\bt_1)\eta(d\bt_2)\right)
$$
for $\eta\in M(K_1)$. We view the problem \eqref{e:problem2} as the
minimization problem (2.1) in \cite{molchanov:zuyev:2004}. In our case
the set $A$ coincides with the cone $M_1^+(K_1)$ of probability measures,
the set $C$ is the negative half-line $(-\infty,0]$, and $H:\,
M(K_1)\to\bbr$ is given by 
$$
H(\eta) = r \int_{K_1}\int_{K_1} R_\BX( \bt_1,\bt_2)\,
\eta(d\bt_1)\eta(d\bt_2) - \int_{K_1} R_\BX( \bt,\bb)\, \eta(d\bt)\,.
$$
This function is also easily seen to be Fr\'echet differentiable at $\mu$, and
$$
DH(\mu)[\eta]= 2r \int_{K_1}\int_{K_1} R_\BX( \bt_1,\bt_2)\,
\mu(d\bt_1)\eta(d\bt_2) - \int_{K_1} R_\BX( \bt,\bb)\, \eta(d\bt)
$$
for $\eta\in M(K_1)$. Finally, the fact that \eqref{e:condition.b} holds as a strict
inequality implies that the measure $\mu$ is regular according to
Definition 2.1 in \cite{molchanov:zuyev:2004}. 

The claim \eqref{e:nec.cond} now follows from Theorem 3.1 in \cite{molchanov:zuyev:2004}. 
\end{proof}

If, for example, the covariance function $R^{(2)}_\BX$ is strictly
positive on $K_1$, then an alternative way of writing the conclusion
of Theorem \ref{t:nec.cond2} is 
$$
\frac{\int_{K_1} R_\BX^{(1)}( \bt_1, \bt)\,
\mu(d\bt_1)}{\int_{K_1} R_\BX^{(2)}( \bt_1, \bt)\,
\mu(d\bt_1)}
\geq \frac{\int_{K_1}\int_{K_1} R_\BX^{(1)}( \bt_1, \bt_2)\,
\mu(d\bt_1) \mu(d\bt_2)}{\int_{K_1}\int_{K_1} R_\BX^{(2)}( \bt_1, \bt_2)\,
\mu(d\bt_1) \mu(d\bt_2)}
$$
for every $\bt\in K_1$, with equality for $\mu$-almost every
$\bt$. This is a condition of the same nature as the condition in
Theorem \ref{t:condition.mu.1}. The convexity of the double integral
as a function of the measure $\mu$ in the optimization problem in
Theorem \ref{t:condition.mu.1}  makes the necessary condition for
optimality also sufficient. This convexity is lost in Theorem
\ref{t:nec.cond2}, and it is not clear at the moment when the
necessary condition in that theorem is also sufficient. 

We conclude this section with an explicit computation of the limiting
shape $x_\cC$ in Theorem \ref{e:shape} in terms of the optimal
measures in the dual problem. We restrict ourselves to the case  where
the optimal pair $(K_1^{(r)}, K_2^{(r)})$ is such that $K_2^{(r)}$ is a
singleton. This would always be the case, of course, if we considered a
family $\cC$ consisting of a single pair of sets, $(K_1,K_2)$, with
$K_2$ a singleton, to start with. 
\begin{theorem} \label{t:shape.comp}
Under the conditions of Theorem \ref{e:shape}, assume that the set
$K_2^{(r)}=\{\bb\}$ is a singleton. Let $\mu^{(r)}\in M_1^+(K_1)$ be
the optimal measure in the optimization problem \eqref{e:dual} for
the pair $(K_1^{(r)}, K_2^{(r)})$. Then 
\begin{equation} \label{e:shape.comp1}
x_\cC(\bt) = D_\cC(r) \int_{K_1} R_\BX(\bt,\bt_1)\, \mu^{(r)}(d\bt_1),
\, \bt\in T\,,
\end{equation}
if the first minimum in \eqref{e:dual} does not exceed the second
minimum, and
\begin{equation} \label{e:shape.comp2}
x_\cC(\bt) = a\bigl( \mu^{(r)}\bigr)\left[ \int_{K_1}
  R_\BX(\bt,\bt_1)\, \mu^{(r)}(d\bt_1) - b\bigl( \mu^{(r)}\bigr)
  R_\BX(\bt,\bb)\right], \, \bt\in T\,,
\end{equation}
if the first minimum in \eqref{e:dual} is larger than the second
minimum. Here 
\begin{equation} \label{e:a.term}
a\bigl( \mu^{(r)}\bigr) = 
\end{equation}
$$
\frac{R_\BX(\bb,\bb) - r \int_{K_1} R_\BX(
  \bt_1,\bb)\,  \mu^{(r)}(d\bt_1)}{R_\BX(\bb,\bb) \int_{K_1}\int_{K_1} R_\BX(
  \bt_1,\bt_2)\,  \mu^{(r)}(d\bt_1) \mu^{(r)}(d\bt_2) - \left( \int_{K_1} R_\BX(
  \bt_1,\bb)\,  \mu^{(r)}(d\bt_1)\right)^2}
$$
and 
\begin{equation} \label{e:b.term}
b\bigl( \mu^{(r)}\bigr) = 
\end{equation}
$$
\frac{r \int_{K_1}\int_{K_1} R_\BX(
  \bt_1,\bt_2)\,  \mu^{(r)}(d\bt_1) \mu^{(r)}(d\bt_2) -  \int_{K_1} R_\BX(
  \bt_1,\bb)\,  \mu^{(r)}(d\bt_1)}{r \int_{K_1} R_\BX(
  \bt_1,\bb)\,  \mu^{(r)}(d\bt_1) - R_\BX(\bb,\bb)}\,.
$$
\end{theorem}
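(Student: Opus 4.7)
The plan is to identify the unique minimizer $H^{(r)} := H(K_1^{(r)}, K_2^{(r)})$ of \eqref{e:fixed.pair} as an explicit element of $\mathcal L$, and then to read off $x_\cC$ from the reproducing formula $x_\cC(\bt) = E(X(\bt) H^{(r)})$. Since $\mathcal L$ is a Hilbert space and \eqref{e:fixed.pair} is a convex quadratic program with affine constraints, the Karush--Kuhn--Tucker conditions force $H^{(r)}$ to lie in the closed linear span of $\{X(\bt) : \bt \in K_1\} \cup \{X(\bb)\}$, with the coefficient measures given by the optimal Lagrange multipliers for the active constraints. Tracing through the Lagrangian-duality argument already carried out in the proof of Theorem \ref{t:dual.repr}, the optimal multiplier on $K_1$ must be a scalar multiple of the unique optimal probability measure $\mu^{(r)}$ in \eqref{e:dual}, while on $K_2 = \{\bb\}$ it is of the form $\beta\, \delta_\bb$. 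This gives the representation
\begin{equation*}
H^{(r)} = \alpha \int_{K_1} X(\bt)\, \mu^{(r)}(d\bt) - \beta X(\bb),
\end{equation*}
for some $\alpha, \beta \geq 0$. Justifying this representation rigorously is the principal obstacle, which I return to at the end.

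Granted the representation, the reproducing property gives
\begin{equation*}
x_\cC(\bt) = \alpha \int_{K_1} R_\BX(\bt, \bt_1)\, \mu^{(r)}(d\bt_1) - \beta R_\BX(\bt, \bb).
\end{equation*}
Abbreviate $a = \int\!\int R_\BX\, d\mu^{(r)} \otimes d\mu^{(r)}$, $b = \int R_\BX(\bt_1, \bb)\, \mu^{(r)}(d\bt_1)$, and $c = R_\BX(\bb,\bb)$. Complementary slackness yields $w_{H^{(r)}}(\bt) = 1$ on $\mathrm{supp}(\mu^{(r)})$; integrating against $\mu^{(r)}$ produces the single equation $\alpha a - \beta b = 1$.

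If the first minimum in \eqref{e:dual} does not exceed the second, then Remark \ref{rk:m1m2} (applied with $K_2 = \{\bb\}$) gives $m_2 = 0$; the constraint at $\bb$ is inactive, so $\beta = 0$ and $\alpha = 1/a$. Here $\mu^{(r)}$ minimizes $\int\!\int R_\BX\, d\mu \otimes d\mu$ with value $a$, so $D_\cC(r) = 1/a = \alpha$, which establishes \eqref{e:shape.comp1}. If instead the first minimum is larger than the second, the constraint at $\bb$ is active, producing the additional equation $\alpha b - \beta c = r$. Solving the $2 \times 2$ linear system gives $\alpha = (c - rb)/(ac - b^2)$ and $\beta/\alpha = (b - ra)/(c - rb)$; the first quantity coincides with $a(\mu^{(r)})$ in \eqref{e:a.term}, while multiplying the numerator and denominator of \eqref{e:b.term} by $-1$ shows that the second equals $b(\mu^{(r)})$. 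Thus $\beta = a(\mu^{(r)}) b(\mu^{(r)})$, and factoring $\alpha$ out yields \eqref{e:shape.comp2}.

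The main technical difficulty is certifying the starting linear-combination representation. The feasibility-in-a-neighborhood hypothesis of Theorem \ref{e:shape} provides a Slater-type interior point --- the same condition used in the proof of Theorem \ref{t:dual.repr} and propagated via \eqref{e:R.rcts} --- giving strong duality and attainment of the optimal primal/dual pair at $r$. Combined with the uniqueness of $\mu^{(r)}$ in \eqref{e:dual}, this pins down the Lagrange-multiplier measure on $K_1$ as $\alpha \mu^{(r)}$. The stationarity-in-$H$ condition for the Lagrangian (exactly as derived in the proof of Theorem \ref{t:dual.repr}) then forces $H^{(r)}$ into the claimed form, completing the identification.
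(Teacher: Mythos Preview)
Your argument is correct and follows the same overall architecture as the paper's proof: both reduce to identifying $H^{(r)}$, use the feasibility-in-a-neighborhood hypothesis to obtain a Slater point so that Luenberger's strong-duality theorem applies directly at $r$, read off the form $H^{(r)}=a\bigl[m_1\int_{K_1}X\,d\mu^{(r)}-m_2X(\bb)\bigr]$ from stationarity of the Lagrangian, and split into the two cases via Remark~\ref{rk:m1m2}. The only tactical difference is in pinning down the scale. The paper guesses the value of $a$ and then \emph{verifies} that the resulting $H$ is feasible for \eqref{e:fixed.pair} --- invoking Theorem~\ref{t:condition.mu.1} in the first case and Theorem~\ref{t:nec.cond2} in the second to check the constraints $w_H(\bt)\geq 1$ on $K_1$ --- and finally checks $EH^2=D_\cC(r)$. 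You instead extract the linear equations $\alpha a-\beta b=1$ and (in case~2) $\alpha b-\beta c=r$ from complementary slackness and solve; since $H^{(r)}$ is already known to be feasible, no separate feasibility check is needed. Your route is a bit more direct; the paper's route makes the role of the optimality conditions for $\mu^{(r)}$ explicit. One small caveat: you appeal to uniqueness of $\mu^{(r)}$ to identify it with the normalized dual multiplier, but the paper neither asserts nor proves such uniqueness --- its verification argument works for \emph{any} optimal $\mu^{(r)}$, with uniqueness of the primal minimizer $H^{(r)}$ ensuring the formula is independent of that choice.
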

\begin{remark} \label{rk:shape}
{\rm
Notice that, since the set $K_2=\{\bb\}$ is  a singleton, only
a measure in $M_1^+(K_1)$ is a variable over which one can optimize,
as $M_1^+(K_2)$ consists of a single measure, the point mass at
$\bb$. Notice also that we are using the same name, $\mu^{(r)}$, for
the optimal measure throughout Theorem \ref{t:shape.comp} for
notational convenience only, because in the two different cases
considered in the theorem, it referes to optimal solutions to two
different problems. 
}
\end{remark}
\begin{proof}[Proof of Theorem \ref{t:shape.comp}]
By Theorem  \ref{e:shape} all we need to do is to prove the following
representations of the unique minimizer $H\bigl(
K_1^{(r)},K_2^{(r)}\bigr)$  in the
optimization problem \eqref{e:fixed.pair} for the pair $(K_1^{(r)},
K_2^{(r)})$. If the first minimum in \eqref{e:dual} does not exceed the second
minimum, then 
\begin{equation} \label{e:min.1}
H\bigl( K_1^{(r)},K_2^{(r)}\bigr) = D_\cC(r) \int_{K_1}  X(\bt_1)\,
\mu^{(r)}(d\bt_1)\,, 
\end{equation}
and, if the first minimum in \eqref{e:dual} is larger than the second
minimum, then 
\begin{equation} \label{e:min.2}
H\bigl( K_1^{(r)},K_2^{(r)}\bigr) =a\bigl( \mu^{(r)}\bigr)\left[ \int_{K_1}
 X(\bt_1)\, \mu^{(r)}(d\bt_1) - b\bigl( \mu^{(r)}\bigr)
  X(\bb)\right]\,.
\end{equation}

We start by observing that, under the assumptions of Theorem
\ref{e:shape},  the feasible set in the 
optimization problem \eqref{e:fixed.pair} for the pair $(K_1^{(r)},
K_2^{(r)})$ has an interior point. Therefore, Theorem 1 (p. 224) in
\cite{luenberger:1969} applies. It follows that the vector $H\bigl(
K_1^{(r)},K_2^{(r)}\bigr)$ solves the inner minimization problem in 
\eqref{e:optimize.lagrange} when we use
$$
\mu_1= m_1 \mu^{(r)}, \ \ \mu_2= m_2\delta_\bb\,,
$$ 
where $m_1$ and $m_2$ are nonnegative numbers solving the optimization
problem \eqref{e:sep.prob} corresponding to the measures $\mu^{(r)}$
and $\delta_\bb$. It follows immediately that $H\bigl(
K_1^{(r)},K_2^{(r)}\bigr)$ must be of the form
\begin{equation} \label{e:H.interm}
H\bigl( K_1^{(r)},K_2^{(r)}\bigr) =a \left[ m_1\int_{K_1}
 X(\bt_1)\, \mu^{(r)}(d\bt_1) -m_2  X(\bb)\right]
\end{equation}
for some $a\geq 0$. 

We now consider separately the two cases of the theorem. Suppose first
that the first minimum in \eqref{e:dual} does not exceed the second
minimum. In that case we have $m_2=0$ above, see Remark
\ref{rk:m1m2}. According to that remark, this happens when 
\begin{equation} \label{e:case1}
\int_{K_1}R_\BX(\bt_1,\bb)\, \mu^{(r)}(d\bt_1)\leq
r \int_{K_1}\int_{K_1}R_\BX(\bt_1,\bt_2)\, \mu^{(r)}(d\bt_1)
\mu^{(r)}(d\bt_2)\,.
\end{equation}
We combine, in this case, $a$ and $m_1$ in
\eqref{e:H.interm} into a single nonnegative constant, which we still
denote by $a$. We then consider vectors of the form
\begin{equation} \label{e:cand.H1}
H\bigl( K_1^{(r)},K_2^{(r)}\bigr) =a \int_{K_1}
 X(\bt_1)\, \mu^{(r)}(d\bt_1)
\end{equation}
as candidates for the optimal solution in \eqref{e:fixed.pair}.  The
statement \eqref{e:min.1} will follow once we show 
that $a= D_\cC(r)$ is the optimal value of $a$. By Theorem
\ref{t:dual.repr}, we need to show that the optimal value of $a$ is
\begin{equation} \label{e:cand.a1}
a= \left(  \int_{K_1}\int_{K_1} R_\BX( \bt_1,\bt_2)\,
  \mu^{(r)}(d\bt_1)\mu^{(r)}(d\bt_2), \ \right)^{-1}\,.
\end{equation}
The first step is to check that using $a$ given by \eqref{e:cand.a1}
in \eqref{e:cand.H1} leads to a feasible solution to the problem
\eqref{e:fixed.pair}.  Indeed, the fact that the constraints of the
type ``$\geq$'' in that problem are satisfied follows from the
optimality of the measure $\mu^{(r)}$ and Theorem
\ref{t:condition.mu.1}. The fact that the constraint of the
type ``$\leq$'' in that problem is satisfied follows from
\eqref{e:case1}. This establishes the feasibility of the solution. Its
optimality now follows from the fact that using $a$ given by \eqref{e:cand.a1}
in \eqref{e:cand.H1} leads to a feasible solution whose second moment
is equal to the optimal value  $D_\cC(r)$. 

Suppose now that the first minimum in \eqref{e:dual} is larger than
the second minimum. According to Remark \ref{rk:m1m2} this happens
when \eqref{e:case1} fails and, further, we have
$$
\frac{m_1}{m_2} = \left( b\bigl( \mu^{(r)}\bigr)\right)^{-1}\,,
$$
where $b\bigl( \mu^{(r)}\bigr)$ is defined in \eqref{e:b.term}. 
Combining, once again, $a$ and $m_1$ in \eqref{e:H.interm} into a
single nonnegative constant, which is still denoted by $a$, we 
consider vectors of the form
\begin{equation} \label{e:cand.H2}
H\bigl( K_1^{(r)},K_2^{(r)}\bigr) =a\left[  \int_{K_1}
 X(\bt_1)\, \mu^{(r)}(d\bt_1) - b\bigl( \mu^{(r)}\bigr) X(\bb)\right]
\end{equation}
as candidates for the optimal solution in \eqref{e:fixed.pair}.  The
proof will be complete once we show that the value of $a=a\bigl(
\mu^{(r)}\bigr)$ given in \eqref{e:a.term} is the optimal value of
$a$. 

Notice that for vectors of the form \eqref{e:cand.H2}, the optimal
value of $a$ solves the optimization problem 
$$
\min_{a\geq 0} \ \ a
$$
\begin{equation} \label{e:last.prob}
\hskip 0.2in \text{subject to}
\end{equation}
$$
a\left[ \int_{K_1} R_\BX(\bt,\bt_1)\, \mu^{(r)}(d\bt_1) -
  b\bigl(\mu^{(r)}\bigr) R_\BX(\bt,\bb)\right] \geq 1 \ \ \text{for
  each $\bt\in K_1$}\,,
$$
$$
a\left[ \int_{K_1} R_\BX(\bb,\bt_1)\, \mu^{(r)}(d\bt_1) -
  b\bigl(\mu^{(r)}\bigr) R_\BX(\bb,\bb)\right] \leq r\,.
$$
The first step is to check that the value of $a=a\bigl(
\mu^{(r)}\bigr)$ given in \eqref{e:a.term} is feasible for the problem
\eqref{e:last.prob}. First of all, nonnegativity of this value of $a$
follows from the fact that \eqref{e:case1} fails. Furthermore, it
takes only simple algebra to check that the 
``$\leq$'' constraint is satisfied as an equality. In order to see that 
  the ``$\geq$'' constraints are satisfied as well, notice that, since
  \eqref{e:case1} fails, we are in the situation of Theorem
  \ref{t:nec.cond2}. Therefore, the measure $\mu^{(r)}$ satisfies the
  necessary conditions for optimality given in
  \eqref{e:nec.cond}. Again, it 
takes only elementary algebraic calculations to see that these optimality
conditions are equivalent to the ``$\geq$'' constraints in the problem
\eqref{e:last.prob}. 

Now that the feasibility has been established, the optimality of the
solution to the problem \eqref{e:fixed.pair} given by using in
\eqref{e:cand.H2} the value
of $a=a\bigl( \mu^{(r)}\bigr)$ from \eqref{e:a.term}, follows, once again, from the fact that  this feasible solution has second moment
 equal to the optimal value  $D_\cC(r)$, as can be checked by easy
algebra. 
\end{proof}

\section{Isotropic random fields} \label{sec:isotropic}

In this section we will consider stationary isotropic Gaussian random
fields, i.e. random fields for which 
$$
R_\BX( \bt_1, \bt_2)= R(\| \bt_1-\bt_2\|), \, \bt_1,\, \bt_2\in T\,,
$$
for some function $R$ on $[0,\infty)$. We will concentrate on the
asymptotic behaviour of the probabilities $\Psi_{\text sp}(u; r)$
and $\Psi_{\text sp; c}(u; r)$ in \eqref{e:spheres} and
\eqref{e:spheres.center} correspondingly. 

We consider the probability $\Psi_{\text sp; c}(u; r)$ first. In
this case, by \eqref{e:connect.R} and isotropy, 
\begin{equation} \label{e:connect.center}
D_{\cC}(r) = \min_{0\leq \rho\leq D} M_\rho(r)\,,
\end{equation}
where 
\begin{equation} \label{e:diam}
D= \sup\bigl\{ \rho\geq 0:\ \text{there is a ball of radius $\rho$
  entirely in $T$}\bigr\}\,,
\end{equation}
and $M_\rho(r)= D_{K_1,K_2}(r) $ in \eqref{e:fixed.pair} with $K_1$
being the sphere of radius $\rho$ centered at the origin, and
$K_2=\{\bnull\}$. The following result provides a fairly detailed
description of the asymptotic behaviour of the probability
$\Psi_{\text sp; c}(u; r)$. 
\begin{theorem} \label{t:isotropic.centers}
Let $\BX$ be isotropic. Then
\begin{equation} \label{e:centers.exact}
\lim_{u\to\infty} \frac{1}{u^2} \log \Psi_{\text sp; c}(u; r) =
-\frac12 \min_{0\leq \rho\leq D} M_\rho(r)\,.
\end{equation}
Furthermore, for every $0<r\leq 1$,  
$M_\rho(r) = \bigl( W_\rho(r)\bigr)^{-1}$, where 
\begin{equation} \label{e:M.r}
W_\rho(r) = \left\{ 
\begin{array}{ll}
D(\rho) & \text{if} \ R(\rho)\leq rD(\rho)\,, \\
\frac{R(0)D(\rho)-\bigl( R(\rho)\bigr)^2}{R(0)-2rR(\rho) + r^2D(\rho)}
& \text{if} \ R(\rho)> rD(\rho)\,.
\end{array}
\right.
\end{equation}
Here 
\begin{equation} \label{e:def.D}
D(\rho) = \int_{S_\rho(\bnull)} \int_{S_\rho(\bnull)}
R(\|\bt_1-\bt_2\|)\mu_h(d\bt_1)\, \mu_h(d\bt_2)\,,
\end{equation}
where $S_\rho(\bnull)$ is the sphere of radius $\rho$ centered at the
origin, and $\mu_h$ is the rotation invariant probability measure on
that sphere. 
\end{theorem}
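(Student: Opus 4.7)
The plan is to reduce the problem, one radius at a time, to an explicit two-parameter quadratic program. The first step uses stationarity of $\BX$ (implied by isotropy): the primal \eqref{e:fixed.pair} for a pair $(S_B, c_B)$ is invariant under translation of $B$, so $D_{S_B, c_B}(r)$ depends only on the radius $\rho$ of $B$. Combining with \eqref{e:connect.R} yields the representation \eqref{e:connect.center}, with the minimum attained on the compact interval $[0,D]$ by continuity of $\rho \mapsto M_\rho(r)$.

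The heart of the proof is to compute $M_\rho(r) = D_{S_\rho(\bnull),\{\bnull\}}(r)$. Rather than detour through the dual formula of Theorem \ref{t:dual.repr}(b), I attack the primal \eqref{e:fixed.pair} directly using symmetry. The objective and constraints are invariant under every orthogonal rotation $g \in O(d)$, acting on $\mathcal{L}$ through its action on $T$ (this uses isotropy of $R_\BX$). Hence, if $H^\star$ is the unique optimizer (which exists and is unique by Theorem \ref{t:dual.repr}(a) whenever the feasible set is nonempty), then $g \cdot H^\star$ is also feasible with the same norm, and uniqueness forces $g \cdot H^\star = H^\star$ for every $g$. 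Thus $H^\star$ lies in the $O(d)$-invariant subspace of the closed span in $\mathcal{L}$ of $\{X(\bt) : \bt \in \{\bnull\} \cup S_\rho(\bnull)\}$, which is the two-dimensional subspace
\[
V \;=\; \mathrm{span}\bigl\{ X(\bnull),\, Z_\rho \bigr\}, \qquad Z_\rho \;:=\; \int_{S_\rho(\bnull)} X(\bt)\, \mu_h(d\bt).
\]

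Writing $H = \alpha X(\bnull) + \beta Z_\rho$ and evaluating inner products via isotropy (in particular, $E(X(\bt)Z_\rho) = D(\rho)$ for every $\bt \in S_\rho(\bnull)$), the primal collapses to
\[
\min\bigl\{\alpha^2 R(0) + 2\alpha\beta R(\rho) + \beta^2 D(\rho)\bigr\}
\]
subject to $\alpha R(\rho) + \beta D(\rho) \geq 1$ and $\alpha R(0) + \beta R(\rho) \leq r$. This convex two-variable quadratic program I would solve by a routine KKT case split. If only the first constraint is active, the optimizer is $(\alpha,\beta) = (0, 1/D(\rho))$ with value $1/D(\rho)$, and this is self-consistent precisely when $R(\rho) \leq r D(\rho)$. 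Otherwise both constraints bind, and solving the resulting $2 \times 2$ linear system gives value
\[
\frac{R(0) - 2rR(\rho) + r^2 D(\rho)}{R(0)D(\rho) - R(\rho)^2},
\]
the denominator being positive by Cauchy--Schwarz applied to $X(\bnull)$ and $Z_\rho$ in the RKHS inner product (in the nondegenerate case). Inverting these two values reproduces \eqref{e:M.r} exactly.

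Finally, to upgrade the liminf/limsup bounds of Theorem \ref{t:LDP.path} to the genuine limit \eqref{e:centers.exact} I would invoke Corollary \ref{c:LDP.precise}. Choose any $\rho^\star$ attaining the minimum in \eqref{e:connect.center}; local feasibility of the primal at $(S_{\rho^\star}(\bnull),\{\bnull\})$ in a neighborhood of $r$ is witnessed by the uniformly feasible anchor $H_0 = (D(\rho^\star) - R(\rho^\star)^2/R(0))^{-1}\bigl( Z_{\rho^\star} - (R(\rho^\star)/R(0))\,X(\bnull)\bigr)$, which satisfies $E(X(\bt)H_0) = 1$ on $S_{\rho^\star}(\bnull)$ and $E(X(\bnull)H_0) = 0$, hence is feasible for every $r' \geq 0$. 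The main technical obstacle I anticipate is making the symmetrization step in paragraph two fully rigorous --- specifically, verifying that the $O(d)$-invariant closed subspace of the span of the field values on $\{\bnull\} \cup S_\rho(\bnull)$ is precisely the two-dimensional space $V$, which amounts to showing that no nontrivial higher spherical harmonic of the restriction of $\BX$ to $S_\rho(\bnull)$ survives rotational averaging.
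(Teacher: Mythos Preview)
Your proposal is correct and takes a genuinely different route from the paper. The paper computes $M_\rho(r)$ via the dual representation of Theorem~\ref{t:dual.repr}(b): it identifies $\mu_h$ as optimal for the first minimum using Theorem~\ref{t:condition.mu.1}, and for the second minimum rewrites the ratio $A_{K_1,K_2}/B_{K_1,K_2}$ as $R(0)/r^2$ minus a term whose denominator is monotone in the double integral, so minimizing the ratio again reduces to minimizing $\int\!\!\int R\,d\mu_1 d\mu_1$, yielding $\mu_h$ once more. You instead attack the primal directly, exploiting $O(d)$-invariance plus uniqueness to reduce to a two-parameter quadratic program, which you solve by KKT. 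Your approach is more elementary and self-contained---it bypasses the entire dual machinery of Section~\ref{sec:large.deviations} and the optimal-measure criteria of Section~\ref{sec:optimal.measures}---while the paper's route has the advantage of exhibiting $\mu_h$ as the optimal measure, information that is reused later in Theorem~\ref{t:shape.comp} for the limiting shape.

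For the exact limit \eqref{e:centers.exact}, the paper argues left-continuity of $D_\cC$ directly from the explicit formula \eqref{e:M.r}; your use of Corollary~\ref{c:LDP.precise} with the anchor $H_0$ is equally valid and arguably cleaner.

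The ``technical obstacle'' you flag is not a real obstacle. Once you know $H^\star\in\mathcal L_0$, the closed span of $\{X(\bt):\bt\in\{\bnull\}\cup S_\rho(\bnull)\}$ (immediate, since the orthogonal projection onto $\mathcal L_0$ preserves all the constraints and cannot increase the norm), the Haar averaging operator $P=\int_{O(d)} g\,dg$ is a contraction on $\mathcal L_0$ sending each generator $X(\bt)$, $\bt\in S_\rho(\bnull)$, to $Z_\rho$ and fixing $X(\bnull)$; hence by continuity $P(\mathcal L_0)\subseteq V$, so the fixed subspace is exactly $V$. No spherical-harmonic decomposition is needed.
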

\begin{proof}
We use part (b) of Theorem \ref{t:dual.repr} with $K_1=S_\rho(\bnull)$
and $K_2=\{\bnull\}$. Note first of all that by the rotation
invariance of the measure $\mu_h$, the function
$$
\int_{K_1} R(\| \bt_1-\bt_2\|)\, \mu_h(d\bt_1), \ \bt_2\in
S_\rho(\bnull)\,,
$$
is  constant. Hence by Theorem \ref{t:condition.mu.1}
the measure $\mu_h$ is optimal in the first minimization problem on the
right hand side of \eqref{e:dual}, and the optimal value in that
problem is $D(\rho)$. 

In the second minimization problem on the
right hand side of \eqref{e:dual}, since $K_2$ is a singleton, the
optimization is only over measures $\mu_1\in M_1^+(K_1)$, and so we
drop the unnecessary $\mu_2$ in the argument in the ratio in that
problem. By the isotropy of the field,
$$
\frac{A_{K_1,K_2}(\mu_1)}{B_{K_1,K_2}(\mu_1; r)} = 
\frac{\int_{K_1}\int_{K_1}  R(\| \bt_1-\bt_2\|)\,
  \mu_1(d\bt_1)\mu_1(d\bt_2) R(0)- \bigl(R(\rho)\bigr)^2}{r^2
  \int_{K_1}\int_{K_1}  R(\| \bt_1-\bt_2\|)\,
  \mu_1(d\bt_1)\mu_1(d\bt_2) -2r R(\rho)+ R(0)}
$$
$$
= \frac{R(0)}{r^2} - \frac{(R(\rho)-R(0)/r)^2}{r^2
  \int_{K_1}\int_{K_1}  R(\| \bt_1-\bt_2\|)\,
  \mu_1(d\bt_1)\mu_1(d\bt_2) -2r R(\rho)+ R(0)}\,.
$$
Since the expression in the denominator is nonnegative (see the
discussion following \eqref{e:condition.b}), the ratio in the left
hand side is 
smaller if the double integral in the right hand side is
smaller. Furthermore,  condition \eqref{e:cond.1} reads, in this
case, as
$$
R(0)\geq r \int_{K_1}\int_{K_1}  R(\| \bt_1-\bt_2\|)\,
  \mu_1(d\bt_1)\mu_1(d\bt_2)\,.
$$
This means that, if this condition is satisfied when the double
integral is large, it is also satisfied when the double
integral is small. Recalling that the double integral is smallest
when $\mu=\mu_h$, we conclude that 
$$
\min_{\mu_1\in M_1^+(K_1) \atop 
\text{\rm subject to \eqref{e:cond.1}}}
\frac{A_{K_1,K_2}(\mu_1)}{B_{K_1,K_2}(\mu_1; r)} 
= \left\{ \begin{array}{ll}
\infty & \text{if $R(\rho)< rD(\rho)$,} \\
\frac{R(0)D(\rho) -\bigl( R(\rho)\bigr)^2}{R(0)-2rR(\rho) +
  r^2D(\rho)} & \text{if $R(\rho)\geq rD(\rho)$. } 
\end{array} \right. 
$$
Finally, since
\begin{equation} \label{e:H.D}
\frac{R(0)D(\rho) -\bigl( R(\rho)\bigr)^2}{R(0)-2rR(\rho) +
  r^2D(\rho)} 
\end{equation}
$$
=
D(\rho) - \frac{\bigl( rD(\rho)-R(\rho)\bigr)^2}{R(0)-2rR(\rho) +
  r^2D(\rho)} \leq D(\rho)\,,
$$
we obtain \eqref{e:M.r}. 

It remains to prove \eqref{e:centers.exact}. We use \eqref{e:connect.center}. By 
Theorem
\ref{t:LDP.path},  it is enough to prove that 
the function $D_{\cC}$ is left continuous. By monotonicity, if
$D_{\cC}=\infty$ for some $r>0$, then the same is true for all smaller
values of the argument, and the left continuity is trivial. Let,
therefore, $0<r\leq 1$ be such that $D_{\cC}<\infty$. 
Let $0\leq \rho_0\leq D$ be such that 
$$
M_{\rho_0}(r) = \min_{0\leq \rho\leq D} M_\rho(r)\,.
$$
Then $W_{\rho_0}(r)>0$. By \eqref{e:M.r}, the $W_\rho(r)$ is, for a
fixed $\rho$, a continuous function of $r$. Therefore,
$$
\lim_{s\uparrow r}  D_{\cC}(s) \leq \lim_{s\uparrow r} \bigl(
W_\rho(s)\bigr)^{-1} = 
\bigl(
W_\rho(r)\bigr)^{-1} = D_{\cC}(r)\,.
$$
By the monotonicity of the function $D_{\cC}$, this implies left
continuity. 
\end{proof}

The distinction between the situations described by the two conditions
on the right hand side of \eqref{e:M.r} can be described using the
intuition introduced in the discussion following Example
\ref{rk:cty}. If there is a ``peak'' of height greater than $u$ covering the
entire sphere of radius $\rho$, is it likely that there will be a
``hole'' in the center of the sphere where the height is smaller than
$ru$? Theorem \ref{t:isotropic.centers} says that a hole is likely if
$R(\rho)\leq rD(\rho)$ and unlikely if $R(\rho)> rD(\rho)$, at least
at the logarithmic level. 

It is reasonable to expect that, for spheres of a very small radius, a
hole in the center is unlikely, while for spheres of a very large radius, a
hole in the center is likely, at least if the terms ``very
small'' and ``very large'' are used relatively to the depth of the
hole described by the factor $r$. This intuition turns out to be
correct in many, but not all, cases, and some unexpected phenomena
emerge. We will try to clarify the situation in the subsequent
discussion.

We look at spheres of  very small radius first. Observe first
that by the continuity of the covariance function, we have both
$R(\rho)\to R(0)$ and $D(\rho)\to R(0)$ as $\rho\to 0$. Therefore, if
$0<\rho<1$, then the condition $R(\rho)> rD(\rho)$ holds for spheres
of sufficiently small radii, and a hole that deep is, indeed,
unlikely. Is the same true for $r=1$? In other words, is it true that
there is $\delta>0$ such that
\begin{equation} \label{e:small.radius}
D(\rho)<R(\rho) \ \ \text{for all $0<\rho<\delta$?}
\end{equation}
A sufficient condition is that the function $R$ is concave on
$[0,2\delta]$; this is always the case for a sufficiently small
$\delta>0$ if the covariance function $R$ corresponds to a spectral
measure with a finite second moment. To see how the concavity implies
\eqref{e:small.radius}, note that by the Jensen inequality,
$$
D(\rho) \leq R\left( \int_{S_\rho(\bnull)} \int_{S_\rho(\bnull)}
 \|\bt_1-\bt_2\|\mu_h(d\bt_1)\, \mu_h(d\bt_2)\right)\,.
$$
Further, by the symmetry of the measure $\mu_h$ and the triangle
inequality, 
$$
\int_{S_\rho(\bnull)} \int_{S_\rho(\bnull)}
 \|\bt_1-\bt_2\|\, \mu_h(d\bt_1)\, \mu_h(d\bt_2)
$$
$$
= \int_{S_\rho(\bnull)} \int_{S_\rho(\bnull)}
 \bigl( \|\bt_1-\bt_2\|/2+ \|\bt_1+\bt_2\|/2\bigr)
\, \mu_h(d\bt_1)\, \mu_h(d\bt_2)
$$
$$
\geq  \int_{S_\rho(\bnull)} \int_{S_\rho(\bnull)} 
\|\bt_1\|\, \mu_h(d\bt_1)\, \mu_h(d\bt_2) = \delta\,.
$$
Since the concavity of $R$ on $[0,2\delta]$ implies its monotonicity,
we obtain \eqref{e:small.radius}. 

In dimensions $d\geq 2$, the hole in the center with $r=1$ may be 
unlikely for small spheres even without concavity.  Consider
covariance functions satisfying 
\begin{equation} \label{e:regvar.zero}
R(\rho) = R(0) -a\rho^\beta + o(\rho^\beta), \  \ \text{as $\rho\to 0$,}
\end{equation}
for some $a>0$ and $1\leq\beta\leq 2$. To see that this implies
\eqref{e:small.radius} as well, notice that, under
\eqref{e:regvar.zero}, 
$$
D(\rho) = R(0) - a \rho^\beta \int_{S_1(\bnull)} \int_{S_1(\bnull)}
 \|\bt_1-\bt_2\|^\beta\, \mu_h(d\bt_1)\, \mu_h(d\bt_2) + o(\rho^\beta),\  \
 \text{as $\rho\to 0$.}
$$
Using, as above, the symmetry together with the Jensen inequality and
the triangle inequality we see that
$$
\int_{S_1(\bnull)} \int_{S_1(\bnull)}
 \|\bt_1-\bt_2\|^\beta\, \mu_h(d\bt_1)\, \mu_h(d\bt_2)
$$
$$
= \int_{S_1(\bnull)} \int_{S_1(\bnull)}
\bigl(  \|\bt_1-\bt_2\|^\beta/2 +  \|\bt_1+\bt_2\|^\beta/2\bigr)\,
\mu_h(d\bt_1)\, \mu_h(d\bt_2) 
$$
$$
\geq \int_{S_1(\bnull)} \int_{S_1(\bnull)}
\bigl(  \|\bt_1-\bt_2\|/2 +  \|\bt_1+\bt_2\|/2\bigr)^\beta\,
\mu_h(d\bt_1)\, \mu_h(d\bt_2) 
$$
$$
> \int_{S_1(\bnull)} \int_{S_1(\bnull)}
 \|\bt_1\|^\beta\, \mu_h(d\bt_1)\, \mu_h(d\bt_2) =1\,.
$$
Thus we see that for some $a_1>a$,
$$
D(\rho) = R(0) -a_1\rho^\beta + o(\rho^\beta), \  \ \text{as $\rho\to 0$,}
$$
and so \eqref{e:small.radius} holds for $\delta>0$ small enough. 

An example of the situation where \eqref{e:regvar.zero} holds without
concavity condition is that of the isotropic Ornstein-Uhlenbeck random
field corresponding to $R(t) = \exp\{ -a|t|\}$ for some $a>0$. It is
interesting that for this random field a hole in the center with $r=1$
is unlikely for small spheres in dimension $d\geq 2$, but not in
 dimension $d=1$. Indeed, in the latter case we have 
$$
D(\rho) = \bigl( 1+ e^{-2a\rho})/2> e^{-a\rho} = R(\rho)\,,
$$
no matter how small $\rho>0$ is.

When $\rho\to \infty$, we expect that a hole in the center of a sphere
will become likely no matter what $0<r\leq 1$ is. According to the
discussion above, this happens when
\begin{equation} \label{e:D.large}
\lim_{\rho\to\infty} \frac{D(\rho)}{R(\rho)}= \infty\,.
\end{equation}
This turns out to be true under certain short memory
assumptions. Assume, for example, that $R$ is nonnegative and 
\begin{equation} \label{e:short.mem}
\liminf_{v\to\infty} \frac{R(tv)}{R(v)} \geq t^{-a} \ \ \text{with $a\geq
  d-1$, for all   $0<t\leq 1$.}
\end{equation}
Then by Fatou's lemma,
\beqq
&&\liminf_{\rho\to\infty}\frac{D(\rho)}{R(\rho)} \\
&&\geq 
\int_{S_1(\bnull)} \int_{S_1(\bnull)} \one\bigl( \|\bt_1-\bt_2\|\leq
1\bigr) \liminf_{\rho\to\infty} \frac{R\bigl(
  \|\bt_1-\bt_2\|\rho\bigr)}{R(\rho)}  \, \mu_h(d\bt_1)\,
\mu_h(d\bt_2) \\
&&\geq 
\int_{S_1(\bnull)} \int_{S_1(\bnull)} \one\bigl( \|\bt_1-\bt_2\|\leq
1\bigr)\,  \|\bt_1-\bt_2\|^{-a} \, \mu_h(d\bt_1)\,
\mu_h(d\bt_2)   = \infty\,,
\eeqq
so that \eqref{e:D.large} holds. 

However, in dimensions $d\geq 2$, the situation turns out to be
different under an assumption of a longer memory. Assume, for
simplicity, that $R$ is monotone, and suppose that, for some
$\vep>0$,
\begin{equation} \label{e:long.mem}
R\ \ \text{is regularly varying at infinity with exponent
  $-(d-1)+\vep$.}
\end{equation}
We claim that, in this case
\begin{equation} \label{e:huge.sphere}
\lim_{\rho\to\infty}  \frac{D(\rho)}{R(\rho)}= \int_{S_1(\bnull)}
\int_{S_1(\bnull)}   \|\bt_1-\bt_2\|^{-(d-1)+\vep} \, \mu_h(d\bt_1)\,
\mu_h(d\bt_2) <\infty\,.
\end{equation}
It is easy to prove this using Breiman's theorem as in, for instance,
Proposition 7.5 in \cite{resnick:2007}. Let $Z$ be a positive random
variable such that $P(Z>z) = R(z)/R(0)$, and let $Y$ be an independent
of $Z$ positive random variable whose law is given by the image of the
product measure $\mu_h\times \mu_h$ on $S_1(\bnull)\times S_1(\bnull)$
under the map $(\bt_1,\bt_2)\mapsto \|\bt_1-\bt_2\|^{-1}$. Notice that
$EY^{d-1-\vep/2}<\infty$. Therefore, by Breiman's theorem, as
$\rho\to\infty$, 
$$
D(\rho) = R(0) P(ZY>\rho) \sim R(0) EY^{d-1-\vep}P(Z>\rho)
$$
$$
= R(\rho) \int_{S_1(\bnull)}  \int_{S_1(\bnull)}
\|\bt_1-\bt_2\|^{-(d-1)+\vep} \, \mu_h(d\bt_1)\, 
\mu_h(d\bt_2)\,.
$$

If we call
$$
I(d;\vep) = \int_{S_1(\bnull)}  \int_{S_1(\bnull)}
\|\bt_1-\bt_2\|^{-(d-1)+\vep} \, \mu_h(d\bt_1)\, 
\mu_h(d\bt_2)\,, 
$$
then we have just proved that the hole in the center of a sphere
corresponding to a factor $r<1/I(d;\vep)$ remains unlikely even for
spheres of infinite radius! This is in spite of the fact, that the 
random field is ergodic, and even mixing, as the covariance function
vanishes at infinity. This phenomenon is impossible if $d=1$ since in
this case $D(\rho)$ does not converge to zero as $\rho\to\infty$. 

Some estimates of the integral $I(d;\vep)$ for $d=2$ and $d=3$ are
presented on Fig. \ref{fig:integral}. 

\begin{figure}[!ht] 
\includegraphics[width=10cm,height=4cm]{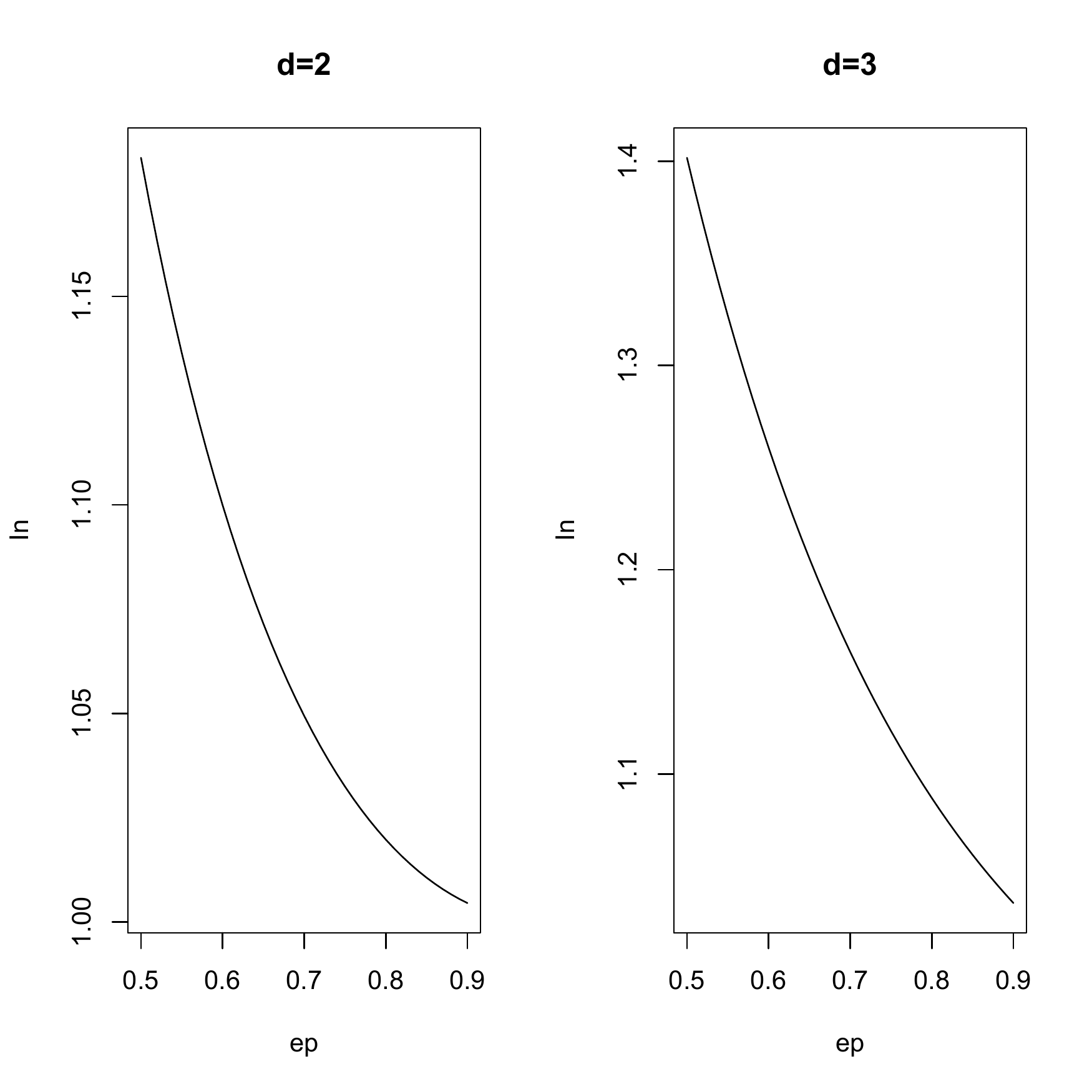}
\caption{The integral $I(d;\vep)$ for $d=2$ and $d=3$}
\label{fig:integral}
\end{figure}

One can pursue the analysis of holes in the center of a sphere a bit
further, and talk about {\it the most likely radius of a sphere} for
which the random field has a ``peak'' of height greater than $u$ covering the
entire sphere, and a ``hole'' in the center of the sphere where the
height is smaller than $ru$, as $u\to\infty$. According to Theorem
\ref{t:isotropic.centers}, this most likely radius is given by ${\rm
  argmax}_{\rho\geq 0} W_\rho(r)$. The following corollary shows how
calculate this most likely radius. For simplicity, we assume that $R$
is monotone and $0<r<1$. Let
$$
H_\rho(r) = \frac{R(0)D(\rho)-\bigl( R(\rho)\bigr)^2}{R(0)-2rR(\rho) +
  r^2D(\rho)}, \ \ \rho>0\,.
$$

\begin{corollary} \label{c:most.likely}
Assume that $R$ is monotone with $R(t)\to 0$ as $t\to\infty$, and
$0<r<1$. Let 
$$
\rho_r^\ast = {\rm   argmax}_{\rho\geq 0} H_\rho(r) \,.
$$
Then $\rho_r^\ast$ is the most likely radius of the sphere to have a
hole corresponding to a factor $r$ in the center. 
\end{corollary}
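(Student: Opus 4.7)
The plan is to unwind the definition of ``most likely radius'' via the preceding discussion: by Theorem \ref{t:isotropic.centers} together with \eqref{e:connect.center} and the identity $M_\rho(r)=W_\rho(r)^{-1}$, one has
$$
\lim_{u\to\infty}\frac{1}{u^2}\log P\bigl( \Psi_{\text sp; c}(u;r) \text{ is witnessed by a sphere of radius } \rho\bigr) = -\frac{1}{2W_\rho(r)},
$$
so the most likely radius is precisely any $\rho$ maximizing $W_\rho(r)$. Thus the task reduces to showing that, under the hypotheses of the corollary,
$$
\operatorname{argmax}_{\rho\ge 0} W_\rho(r) \;=\; \operatorname{argmax}_{\rho\ge 0} H_\rho(r).
$$

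First I would record three preparatory facts. (i) From \eqref{e:H.D}, $H_\rho(r)\le D(\rho)$, with equality iff $R(\rho)=rD(\rho)$; in particular $H_\rho(r)\le W_\rho(r)$ with equality whenever $R(\rho)\ge rD(\rho)$, and strict inequality $W_\rho(r)=D(\rho)>H_\rho(r)$ otherwise. (ii) By writing $\bt_i=\rho\bomega_i$ with $\bomega_i$ uniform on $S_1(\bnull)$, one sees $D(\rho)=E R(\rho\|\bomega_1-\bomega_2\|)$, so monotonicity of $R$ implies $D$ is non-increasing in $\rho$. (iii) Since $R(0)=D(0)$ and $r<1$, the continuous function $f(\rho)\definedas R(\rho)-rD(\rho)$ is strictly positive at $\rho=0$.

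The key step is the following comparison lemma: \emph{for every $\rho_0$ with $f(\rho_0)<0$ there exists $\rho^*\in(0,\rho_0)$ with $f(\rho^*)=0$ and $H_{\rho^*}(r)\ge W_{\rho_0}(r)$.} This is a direct consequence of (iii) and the intermediate value theorem applied to $f$ on $[0,\rho_0]$; monotonicity of $D$ from (ii) then gives $D(\rho^*)\ge D(\rho_0)$, and the algebraic identity $H_{\rho^*}(r)=D(\rho^*)$ (obtained by substituting $R(\rho^*)=rD(\rho^*)$ into the definition of $H_\rho(r)$, whereupon the quotient collapses to $D(\rho^*)$) yields $H_{\rho^*}(r)=D(\rho^*)\ge D(\rho_0)=W_{\rho_0}(r)$.

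Assembling: by (i) and the key lemma, $\max_\rho H_\rho(r)\ge W_\rho(r)$ for every $\rho\ge 0$ (trivially in the regime $R(\rho)\ge rD(\rho)$, via the lemma otherwise), so $\max H\ge \max W$; the reverse inequality follows from (i). Hence the two maxima agree. To conclude on argmaxes, let $\rho_r^*$ maximize $H$. If $R(\rho_r^*)<rD(\rho_r^*)$, then $W_{\rho_r^*}(r)=D(\rho_r^*)>H_{\rho_r^*}(r)=\max H=\max W$, a contradiction; therefore $R(\rho_r^*)\ge rD(\rho_r^*)$, giving $W_{\rho_r^*}(r)=H_{\rho_r^*}(r)=\max W$, so $\rho_r^*$ maximizes $W$ as required.

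The only nontrivial ingredient is the key lemma above, for which the main obstacle is to avoid assuming monotonicity of the ratio $R(\rho)/D(\rho)$: the oscillatory case is handled simply by picking the \emph{last} zero of $f$ to the left of $\rho_0$, where monotonicity of $D$ alone suffices. Continuity of $R$ (forced by sample continuity of $\BX$) and of $D$ (by dominated convergence) makes the intermediate value argument rigorous.
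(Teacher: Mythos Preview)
Your proof is correct and follows essentially the same route as the paper's: both arguments hinge on the identity $H_\rho(r)=D(\rho)$ at a crossing $R(\rho)=rD(\rho)$, together with the monotonicity of $D$ and the inequality $H_\rho(r)\le D(\rho)$ from \eqref{e:H.D}. The paper organizes this via a single threshold $\rho_r=\inf\{\rho>0:R(\rho)\le rD(\rho)\}$ and first shows $\rho_r^*\le\rho_r$, whereas you package the same comparison as a per-$\rho_0$ lemma; the mathematical content is identical.
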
 
\begin{proof}
Since 
$$
\lim_{\rho\to 0} H_\rho(r)  = \lim_{\rho\to \infty} H_\rho(r) = 0\,,
$$
it follows that $\rho_r^\ast\in (0,\infty)$. Write 
$$
\rho_r= \inf\bigl\{ \rho>0:\, R(\rho)\leq  rD(\rho)\bigr\}\,.
$$
Since $0<r<1$, it follows that $\delta_r\in (0,\infty]$. Observe that
for $0<\rho<\rho_r^\ast$, by the monotonicity of $D$ and
\eqref{e:H.D}, 
\begin{equation} \label{e:two.rho}
D(\rho)> D(\rho_r^\ast) \geq H_{\rho_r^\ast}(r) \geq H_\rho(r)\,.
\end{equation}
This implies that $\rho_r^\ast\leq \rho_r$. Indeed, if this were not
the case, there would be $0<\rho<\rho_r^\ast$, for which
$R(\rho)=rD(\rho)$, and this, together with \eqref{e:H.D}, 
 would imply that $D(\rho)=H_\rho(r)$,
contradicting \eqref{e:two.rho}. 

By Theorem \ref{t:isotropic.centers} we conclude that
$W_{\rho_r^\ast}(r)=H_{\rho_r^\ast}(r)$, so it remains to prove that
$W_\rho(r)\leq H_{\rho_r^\ast}(r)$ for all $\rho\not= \rho_r^\ast$. 

However, if  $0<\rho\leq \rho_r$ , then 
$$
W_\rho(r) = H_\rho(r)\leq H_{\rho_r^\ast}(r)
$$
by the definition of $\rho_r^\ast$. On the other hand, if $\rho >
\rho_r$, then by the monotonicity of $D$,
$$
W_\rho(r) \leq D(\rho) \leq D(\rho_r) = H_{\rho_r}(r) \leq H_{\rho_r^\ast}(r)\,,
$$
and so the proof is complete. 
\end{proof}

For the covariance function $R(t)=e^{-t^2}$ the two plots of
Fig. \ref{fig:DH}  show the plot of the functions $D$ and $H(1/2)$, as
well as the optimal radius $\rho_r^\ast$ as a function of $r$. 

\begin{figure}[!ht] 
\includegraphics[width=12cm,height=5cm]{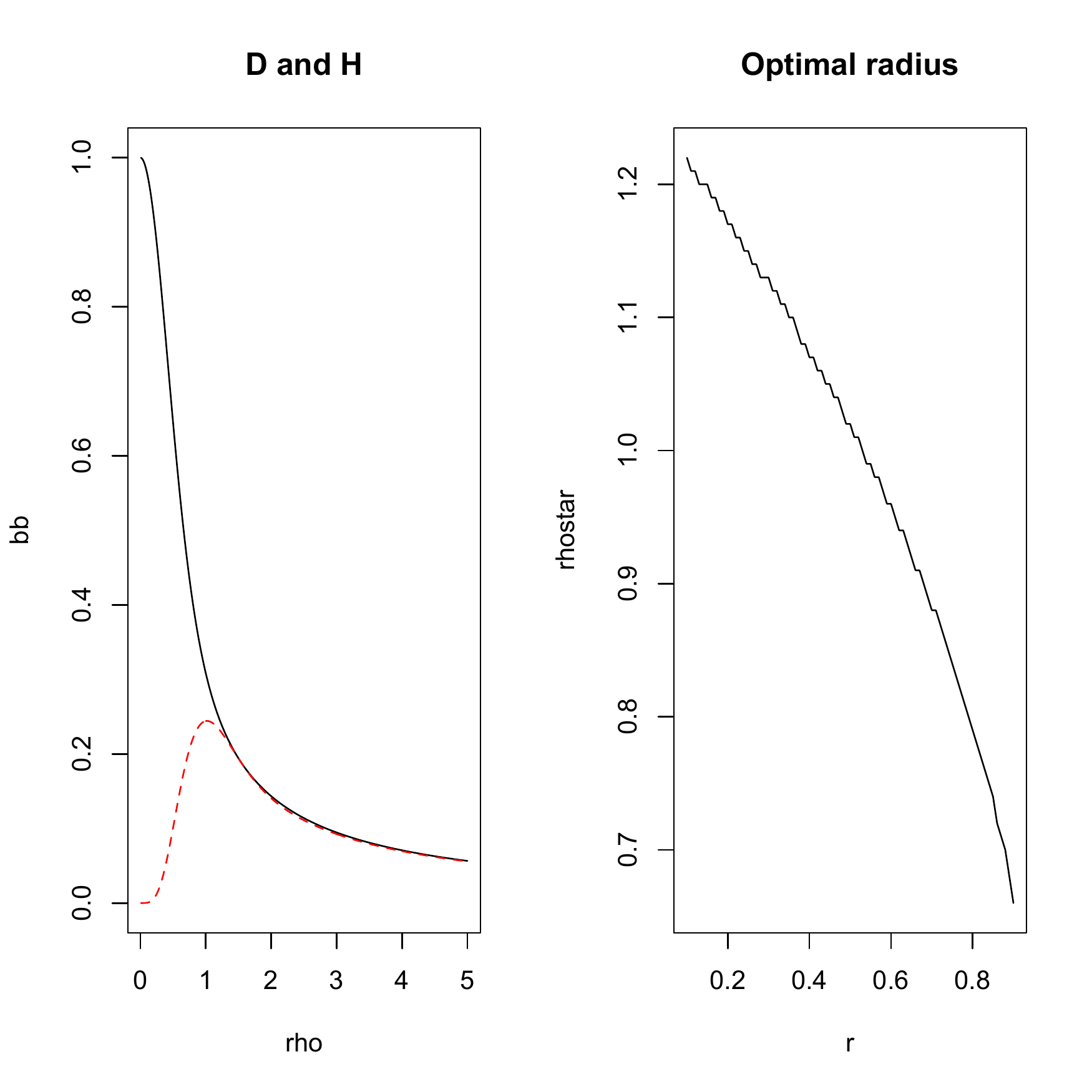}
\caption{
The functions $D(\rho)$ (solid line) and $H_\rho(r)$
  (dashed line) for $r=1/2$ (left plot) and the optimal radius
  $\rho_r^\ast$ (right plot), both for $R(t)=e^{-t^2}$}
\label{fig:DH}
\end{figure}

For the same covariance function $R(t)=e^{-t^2}$ and $r=1/2$ the plots 
of Fig. \ref{fig:shapes}  show the limiting shapes of the random field
described in Theorem \ref{t:shape.comp}. The left plot corresponds to
the sphere of radius $\rho=1$ (falling in the second case of the theorem), while
the right plot correspond to
the sphere of radius $\rho=2$ (falling in the first case of the
theorem). Note that, by the isometry of the random field, the limiting
shape is rotationally invariant. The plots, therefore, present a
section of the limiting shape along the half-axis $t_1\geq 0, \,
t_2=0$. For ease of comparison, the horizontal axis has been labeled in the
units of $t_1/\rho$, i.e. relative to the radius of the sphere.

\begin{figure}[!ht] 
\includegraphics[width=12cm,height=5cm]{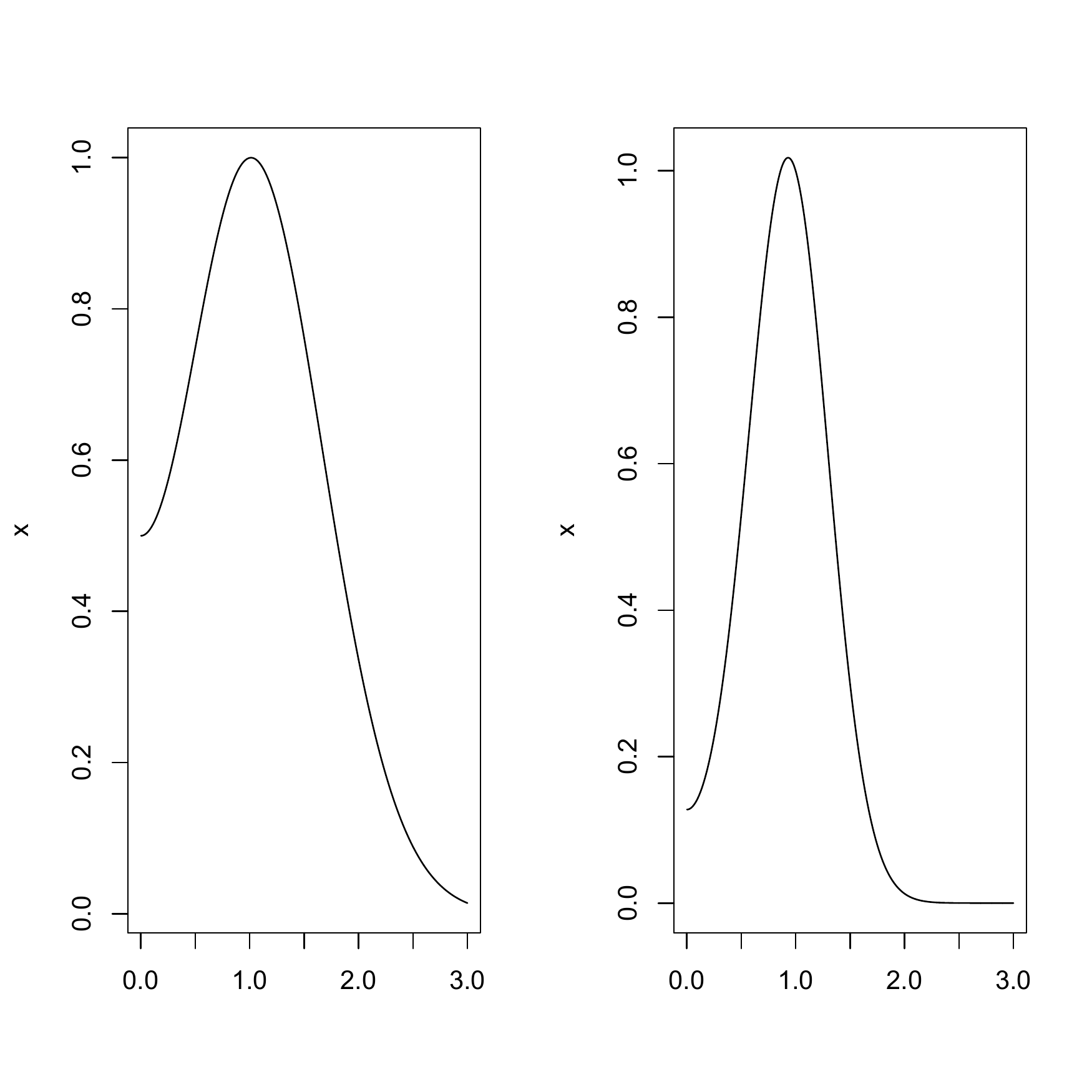}
\caption{
The  limiting shapes for $\rho=1$  (left plot) and   $\rho=2$  (right
plot), both for $r=1/2$ and $R(t)=e^{-t^2}$} 
\label{fig:shapes}
\end{figure}

We finish this section by considering the probability $\Psi_{\text
  sp}(u; r)$ in \eqref{e:spheres}. In this case, by \eqref{e:connect.R} and isotropy, 
\begin{equation} \label{e:connect.any}
D_{\cC}(r) = \min_{0\leq b\leq 1}\min_{0\leq \rho\leq D} M_\rho(r;b)\,,
\end{equation}
where $D$ is as in \eqref{e:diam}, and $M_\rho(r;b)= D_{K_1,K_2}(r) $ in
\eqref{e:fixed.pair} with $K_1$ 
being the sphere of radius $\rho$ centered at the origin, and
$K_2=\{ b\be_1\}$. Here $\be_1$ is the $d$-dimensional vector
$(1,0,\ldots, 0)$. It turns out that in many circumstances the
asymptotic behavior of the probabilities $\Psi_{\text sp; c}(u; r)$ and $\Psi_{\text
  sp}(u; r)$ is the same, at least on the logarithmic case, and so our 
analysis of the former probability applies to the latter
probability as well. 

The following result demonstrates one case when the two probabilities
are asymptotically equivalent. Assume for notational simplicity that
$R(0)=1$, and use the notation $S_1$ in place of $S_1(\bnull)$. 
 For $\rho\geq 0$, $0\leq b\leq 1$ and
$\mu\in M_1^+( S_1)$, let 
\begin{equation} \label{e:V}
V(\rho,b; \mu) = 
\end{equation} 
$$
\frac{\int_{S_1}\int_{S_1} R\bigl(\rho\| \bt_1-\bt_2\|\bigr)\, 
\mu(d\bt_1)\mu(d\bt_2) - \left( \int_{S_1} R(\rho\|\bt
  -b\be_1\|)\, \mu(d\bt)\right)^2}{ 1 -2r \int_{S_1} R(\rho\|\bt
  -b\be_1\|)\, \mu(d\bt) + r^2  \int_{S_1}\int_{S_1}
 R\bigl(\rho\| \bt_1-\bt_2\|\bigr)\,  \mu(d\bt_1)\mu(d\bt_2) }.
$$

\begin{theorem} \label{t:equiv.prob}
Let 
$$
V_*(\rho,b) = \min_{\mu\in M_1^+\bigl( S_1\bigr)} V(\rho,b;
\mu)
$$
subject to 
\begin{equation} \label{e:cond.isotr}
 \int_{S_1} R(\rho\|\bt -b\be_1\|)\, \mu(d\bt)
\geq r  \int_{S_1}\int_{S_1}
 R\bigl(\rho\| \bt_1-\bt_2\|\bigr)\,  \mu(d\bt_1)\mu(d\bt_2) \,.
\end{equation} 
If, for every $0\leq \rho\leq D$ such that $R(\rho)\geq rD(\rho)$, 
the function $V_*(\rho,b), \, 0\leq
b\leq 1$ achieves its maximum at $b=0$, then 
\begin{equation} \label{e:anywhere.exact}
\lim_{u\to\infty} \frac{1}{u^2} \log \Psi_{\text sp}(u; r) =
-\frac12 \min_{0\leq \rho\leq D} \bigl( W_\rho(r)\bigr)^{-1}\,,
\end{equation}
where $W_\rho(r)$ is defined by \eqref{e:M.r}. 
\end{theorem}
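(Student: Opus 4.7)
The plan is to use \eqref{e:connect.any} to reduce $D_\cC(r)$ to a double minimum over the pair $(\rho, b)$, then to show that under the hypothesis the inner minimum over $b$ is always attained at $b = 0$. Once this reduction is made, the result becomes the one already established in Theorem \ref{t:isotropic.centers} for the probability $\Psi_{\text sp;c}(u; r)$, and the exact limit follows from the same left continuity argument combined with Theorem \ref{t:LDP.path}.

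For the first step, I would apply Theorem \ref{t:dual.repr}(b) to each pair $(K_1, K_2) = (S_\rho(\bnull), \{b \be_1\})$. Since $K_2$ is a singleton, the optimization over $\mu_2$ in \eqref{e:dual} is vacuous; after rescaling the sphere of radius $\rho$ to the unit sphere $S_1$ the second minimum reduces exactly to $V_*(\rho, b)$ with constraint \eqref{e:cond.isotr}. The first minimum on the right-hand side of \eqref{e:dual} does not involve $K_2$ and so does not depend on $b$; by isotropy and Theorem \ref{t:condition.mu.1} the optimal probability measure on $K_1$ is the rotation-invariant $\mu_h$, and its value is $D(\rho)$. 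Thus
\begin{equation*}
M_\rho(r; b) = \bigl\{\min\bigl[D(\rho),\, V_*(\rho, b)\bigr]\bigr\}^{-1}.
\end{equation*}

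The main step, and the principal obstacle, is to show that for every $\rho$,
\begin{equation*}
\min_{0 \leq b \leq 1} M_\rho(r; b) \;=\; M_\rho(r; 0) \;=\; \bigl( W_\rho(r)\bigr)^{-1}.
\end{equation*}
I would split into the two cases appearing in \eqref{e:M.r}. If $R(\rho) \leq r D(\rho)$, then as in the proof of Theorem \ref{t:isotropic.centers} the feasibility constraint \eqref{e:cond.isotr} fails at $b = 0$ for every $\mu$ (because $\mu_h$ already minimizes the relevant double integral), so $V_*(\rho, 0) = \infty$ and $W_\rho(r) = D(\rho)$; since $\min[D(\rho), V_*(\rho, b)] \leq D(\rho)$ holds trivially for every $b$, we get $M_\rho(r; b) \geq M_\rho(r; 0)$. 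If $R(\rho) > r D(\rho)$, then $V_*(\rho, 0) = H_\rho(r) \leq D(\rho)$ by \eqref{e:H.D}, and the hypothesis of the theorem gives $V_*(\rho, b) \leq V_*(\rho, 0)$ for every $b$, so $\min[D(\rho), V_*(\rho, b)] = V_*(\rho, b) \leq V_*(\rho, 0)$, again yielding $M_\rho(r; b) \geq M_\rho(r; 0)$.

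With $D_\cC(r) = \min_{0 \leq \rho \leq D} (W_\rho(r))^{-1}$ established, I would upgrade this to the full limit \eqref{e:anywhere.exact} via left continuity of $D_\cC$ at $r$, exactly as in the closing paragraph of the proof of Theorem \ref{t:isotropic.centers}: since $D_\cC$ is non-increasing in $r$ one has $\lim_{\tau \uparrow r} D_\cC(\tau) \geq D_\cC(r)$, while picking a minimizer $\rho^*$ at $r$ gives $D_\cC(\tau) \leq M_{\rho^*}(\tau; 0) = (W_{\rho^*}(\tau))^{-1}$, whose continuity in $\tau$ (from \eqref{e:M.r}) forces the reverse inequality. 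Feeding this left continuity into Theorem \ref{t:LDP.path} produces \eqref{e:anywhere.exact}.
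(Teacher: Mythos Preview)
Your proposal is correct and follows essentially the same route as the paper: identify $M_\rho(r;b) = \bigl(\min[D(\rho), V_*(\rho,b)]\bigr)^{-1}$ via Theorem~\ref{t:dual.repr}, split according to the sign of $R(\rho) - rD(\rho)$, and reduce to Theorem~\ref{t:isotropic.centers}. One small imprecision: in your Case~1 you take $R(\rho)\le rD(\rho)$ and claim the constraint \eqref{e:cond.isotr} fails at $b=0$ for every $\mu$, but at the boundary $R(\rho)=rD(\rho)$ it is satisfied by $\mu_h$ (so $V_*(\rho,0)=D(\rho)$, not $\infty$); the paper avoids this by making Case~1 strict and putting the boundary into Case~2, where the theorem's hypothesis applies---your conclusion $M_\rho(r;b)\ge M_\rho(r;0)$ is unaffected either way.
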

\begin{proof}
It follows from \eqref{e:connect.any}, \eqref{e:connect.center} and
Theorem \ref{t:dual.repr} that we only need to check that
$M_\rho(r)=\inf_{0\leq b\leq 1} M_\rho(r;b)$ for all $0\leq \rho\leq
D$. Notice that by \eqref{e:problem2}, \eqref{e:condition.b} and
isotropy, 
$$
M_\rho(r;b) = \left( \min\bigl( D(\rho),
  V_*(\rho,b)\bigr)\right)^{-1}\,, 
$$
where $D(\rho)$ is given in \eqref{e:def.D}. Further,
$M_\rho(r)=M_\rho(r;0)$. If $R(\rho)<rD(\rho)$,
then $V_*(\rho,0) =\infty$, so there nothing to check. If, on the
other hand, $R(\rho)\geq rD(\rho)$, then  $V_*(\rho,\cdot)$
achieves its maximum at the origin, so the claim of the theorem
follows. 
\end{proof}

The condition 
\begin{equation} \label{e:min.v}
V_*(\rho,0) =\max_{0\leq b\leq 1} V_*(\rho,b)\,,
\end{equation} 
for $0\leq \rho\leq D$ such that $R(\rho)\geq rD(\rho)$, 
deserves a discussion. We claim that this condition is implied by the
following, simpler, condition. 
\begin{equation} \label{e:min.int}
\min_{0\leq b\leq 1} \int_{S_1} R(\rho\|\bt -b\be_1\|)\, \mu_h(d\bt)
= \int_{S_1} R(\rho\|\bt \|)\, \mu_h(d\bt) = R(\rho)\,, 
\end{equation}
where $\mu_h$ is the rotation invariant probability measure on $S_1$. 

To see this let 
$R(\rho)\geq rD(\rho)$. It follows by \eqref{e:min.int} that the
constraint \eqref{e:condition.b} is satisfied for the measure $\mu_h$
and the vector $b\be_1$ for any $0\leq b\leq 1$. Therefore,
$$
V_*(\rho,b) \leq V(\rho,b; \mu_h)
= G\left( \int_{S_1} R(\rho\|\bt -b\be_1\|)\, \mu_h(d\bt)\right)\,,
$$
where 
$$
G(x) = \frac{D(\rho)-x^2}{1-2rx+r^2D(\rho)},\, \ R(\rho)\leq x\leq 1\,.
$$
Notice that
$$
G^\prime(x) =\frac{-2(x-rD(\rho))(1-rx)}{\bigl( 1-2rx+r^2D(\rho)\bigr)^2}
\leq 0\,,
$$
so that the function $G$ achieves its maximum at $x=R(\rho)$. We
conclude that 
$$
V_*(\rho,b) \leq V(\rho,0; \mu_h) = V_*(\rho,0)\,,
$$
so that \eqref{e:min.v} holds. 

Numerical experiments indicate that the condition \eqref{e:min.int}
tends to hold for values of the radius $\rho$ exceeding a certain positive
threshold. For instance, in  dimension $d=2$ for both $R(t) =
e^{-t^2}$ and $R(t) = e^{-|t|}$, this threshold is around
$\rho=1.18$. 

However, it is clear that condition \eqref{e:min.int} is not
necessary for condition \eqref{e:min.v}. In fact, for 
condition \eqref{e:min.v} to be satisfied one only needs 
a measure $\mu\in M_1^+\bigl( S_1\bigr)$ satisfying
\eqref{e:cond.isotr} such that 
\begin{equation} \label{e:good.v}
V(\rho,b; \mu) \leq V(\rho,0; \mu_h) \,,
\end{equation}
and what condition \eqref{e:min.int} guarantees is that this
measure can be taken to be the rotationally invariant measure on
$S_1$. If \eqref{e:min.int} fails, then there is no guarantee that the 
rotationally invariant measure will play the required role. 

At least in the case when the covariance function $R$ is monotone,
one can consider a measure $\mu$ that puts a point mass at the point
on the sphere closest to the point $b\be_1$. We have considered
measures $\mu\in M_1^+\bigl( S_1\bigr)$ of the form
\begin{equation} \label{e:mixture.mu}
\mu = w\delta_{{\rm sign} (b)\be_1}+ (1-w)\mu_h
\end{equation}
for some $0\leq w\leq 1$, where $\delta_a$ is, as usual, the Dirac
point mass at a point $a$. With this choice, the function $V$ in
\eqref{e:V} becomes the ratio of two quadratic functions of $w$, and
one can choose the value of $w$ that minimizes the expression, because
\eqref{e:good.v} requires us to search for as small $V$ as possible. 

In our numerical experiments we have followed an even simpler
procedure and chosen the value of $w$ that minimizes the quadratic
polynomial in the numerator of \eqref{e:V}. For the cases of $R(t) =
e^{-t^2}$ and $R(t) = e^{-|t|}$ the resulting measure $\mu$ in
\eqref{e:mixture.mu} satisfied, for all $0\leq \rho\leq D$ such that
$R(\rho)\geq rD(\rho)$, both \eqref{e:cond.isotr}  and
\eqref{e:good.v}. Therefore, in all of these cases the conclusion
\eqref{e:anywhere.exact} of Theorem \ref{t:equiv.prob} holds.

\bigskip

{\bf Acknowledgement} We are indebted to Jim Renegar of Cornell
University for useful discussions of the duality gap in convex
optimization and for drawing our attention to the paper
\cite{anderson:1983}.


\begin{thebibliography}{10}
\expandafter\ifx\csname natexlab\endcsname\relax\def\natexlab#1{#1}\fi

\bibitem[Adler et~al.(2014)Adler, Moldavskaya and
  Samorodnitsky]{adler:moldavskaya:samorodnitsky:2014}
{\sc R.~Adler, E.~Moldavskaya {\rm and} G.~Samorodnitsky} (2014): On the
  existence of paths between points in high level excursion sets of {G}aussian
  random fields.
\newblock {\em Annals of Probability\/} 42:1020--1053.

\bibitem[Adler and Taylor(2007)]{adler:taylor:2007}
{\sc R.~Adler {\rm and} J.~Taylor} (2007): {\em Random Fields and Geometry\/}.
\newblock Springer, New York.

\bibitem[Anderson(1983)]{anderson:1983}
{\sc E.~Anderson} (1983): A review of duality theory for linear programming
  over topological vector spaces.
\newblock {\em Journal of Mathematical Analysis and Applications\/}
  97:380--392.

\bibitem[Aza\"is and Wschebor(2009)]{azais:wschebor:2009}
{\sc J.~Aza\"is {\rm and} M.~Wschebor} (2009): {\em Level Sets and Extrema of
  Random Processes and Fields\/}.
\newblock Wiley, Hoboken, N.J.

\bibitem[Deuschel and Stroock(1989)]{deuschel:stroock:1989}
{\sc J.-D. Deuschel {\rm and} D.~Stroock} (1989): {\em Large Deviations\/}.
\newblock Academic Press, Boston.

\bibitem[Dunford and Schwartz(1988)]{dunford:schwartz:1988}
{\sc N.~Dunford {\rm and} J.~Schwartz} (1988): {\em Linear Operators, Part 1:
  General Theory\/}.
\newblock Wiley, New York.

\bibitem[Luenberger(1969)]{luenberger:1969}
{\sc D.~Luenberger} (1969): {\em Optimization by Vector Space Methods\/}.
\newblock Wiley and Sons, Inc., New York.

\bibitem[Molchanov and Zuyev(2004)]{molchanov:zuyev:2004}
{\sc I.~Molchanov {\rm and} S.~Zuyev} (2004): Optimization in the space of
  measures and optimal design.
\newblock {\em ESAIM: Probability and Statistics\/} 8:12--24.

\bibitem[Resnick(2007)]{resnick:2007}
{\sc S.~Resnick} (2007): {\em Heavy-Tail Phenomena: Probabilistic and
  Statistical Modeling\/}.
\newblock Springer, New York.

\bibitem[{van der Vaart} and {van Zanten}(2008)]{vandervaart:vanzanten:2008}
{\sc A.~{van der Vaart} {\rm and} J.~{van Zanten}} (2008): Reproducing kernel
  Hilbert spaces of Gaussian priors.
\newblock In {\em Pushing the Limits of Contemporary Statistics: Contributions
  in Honor of Jayanta K. Ghosh\/}, volume~3 of {\em IMS Collections\/}.
  Institute of Mathematical Statistics, pp. 200--222.

\end{thebibliography}

\end{document}